\newtheorem{lem}{Lemma}[section]
\newtheorem{thm}[lem]{Theorem}
\begin{document}

\title{A $C_{4}$-decomposition of the $\lambda$-fold line graph of $K_{m,n}$}

\author{Yifan Zhao, Yulong Wei, Weihua Yang\footnote{Corresponding author. E-mail: ywh222@163.com; yangweihua@tyut.edu.cn}\\
\\ \small Department of Mathematics, Taiyuan University of
Technology,\\
\small  Taiyuan Shanxi-030024,
China\\
}
\date{}
\maketitle

{\small{\bf Abstract:}
The small cycle decompositions of line graph ($\lambda$-fold line graph) of complete graphs and complete bipartite graphs are studied by many papers. In particular, Colby and Rodger obtained necessary and sufficient conditions for the existence of a $C_{4}$-decomposition of the $\lambda$-fold line graph of $K_{n}$,  and Ganesamurthy and Paulraja completely determined the values of $n$ and $\lambda$ for which the $\lambda$-fold line graph of $K_{n}$ has a $C_{5}$-decomposition. In this paper, we obtain the necessary and sufficient condition for the existence of a $C_{4}$-decomposition of the $\lambda$-fold line graph of $K_{m,n}$.

\vskip 0.5cm  Keywords: Cycle decomposition; Line graph; Complete bipartite graph

\section{Introduction}

All graphs considered here are loopless and finite. A graph $G$ is a pair $(V(G),E(G))$ consisting of a set $V(G)\neq\emptyset$ and a set $E(G)$ of
two-element subsets of $V(G)$, where $V(G)$ and $E(G)$ are called the vertex set and edge set of $G$ respectively. If $G$ is a multigraph, the edges with two same endpoints are regarded as different edges.
For nonempty subsets $S\subseteq V(G)$ and $T\subseteq E(G)$, we use $G[S]$
and $G-T$ to denote the subgraph of $G$ induced by $S$ and the subgraph of $G$ obtained by deleting $T$, respectively. If $G$ is a simple graph, let $\lambda G$ denote the multigraph in which vertices $u$ and $v$ are joined by $\lambda$ edges if they are adjacent in $G$. The cartesian product of simple graphs $G$ and $H$ is the graph $G\Box H$ whose vertex set is $V(G)\times V(H)$ and whose edge set is the set of all pairs $\{(u_{1},v_{1}),(u_{2},v_{2})\}$ such that either $\{u_{1},u_{2}\}\in E(G)$ and $v_{1}=v_{2}$, or $\{v_{1},v_{2}\}\in E(H)$ and $u_{1}=u_{2}$. The union of two graphs $G_{1}$ and $G_{2}$, denoted by $G_{1}\cup G_{2}$, is the graph with vertex set $V(G_{1})\cup V(G_{2})$ and edge set $E(G_{1})\cup E(G_{2})$. The join of two disjoint graphs $G$ and $H$, denoted by $G\vee H$, is the graph obtained from $G\cup H$ by joining each vertex of $G$ to each vertex of $H$. Let $K_{n}$ be the complete graph on $n$ vertices with vertex set $Z_{n}=\{0,1,\ldots,n-1\}$ and edge set $E(K_{n})=\{\{i,j\}\mid i,j\in Z_{n}, i\neq j\}$. Let $K_{m,n}$ be the complete bipartite graph $(X, Y)$ with $|X|=m$ and $|Y|=n$. Let $C_{n}$ be the cycle on $n$ vertices denoted by $(v_{1},v_{2},\ldots,v_{n})$.

The line graph of a graph $G$, denoted by $L(G)$, is the graph with $V(L(G))=E(G)$ and $\{e_{1},e_{2}\}\in E(L(G))$ if and only if $|e_{1}\cap e_{2}|=1$ in $G$. The $j$-clique $G(j)$ in $L(G)$ is the
subgraph of $L(G)$ induced by the vertex set $\{\{i,j\}\mid \{i,j\}\in E(G)\}$. In particular, $K_{m,n}(j)$ is the $j$-clique in $L(K_{m,n})$. Clearly, $L(K_{m,n})$ is isomorphic to $K_{m}\Box K_{n}$.

If $H_{1},H_{2},\ldots,H_{l}$ are edge disjoint subgraphs of $G$ such that $E(G)=\cup_{i=1}^l E(H_{i})$, then we say that $G$ has an $\{H_{1},H_{2},\ldots,H_{l}\}$-decomposition, and we write this as $G=H_{1}\oplus H_{2}\oplus \cdots \oplus H_{l}$ or $G=\oplus_{i=1}^l H_{i}$. For simplicity of presentation, we define $G\oplus \emptyset=G$. If $H_{i}\cong H$ for $1\leq i\leq l$, then we say that $G$ has an $H$-decomposition. If $H\cong C_{k}$, then we say that $G$ admits a $C_{k}$-decomposition. Obviously, if $G$ has a $C_{k}$-decomposition, then $\lambda G$ has a $C_{k}$-decomposition. For undefined graph theoretical terms we refer readers to~\cite{Bondy}.

The cycle decompositions of the line graph and $\lambda$-fold line graph of the complete graphs were studied by many authors.  Heinrich and Nonay \cite{Heinrich} characterized the 4-cycle decompositions of the line graph and $\lambda$-fold line graph of the complete graph; Colby and Rodger \cite{Colby} characterized that $\lambda L(K_{n})$ has a $C_{4}$-decomposition if and only if $n$ and $\lambda$ satisfy $(1)$ $n$ is even, or $(2)$ $n\equiv 1~(mod~4)$ and $\lambda \equiv 0~(mod~2)$, or $(3)$ $n\equiv 3~(mod~4)$ and $\lambda \equiv 0~(mod~4)$, or $(4)$ $n\equiv 1~(mod~8)$ and $\lambda$ is odd.  Cox \cite{Cox1} obtained necessary and sufficient conditions for the existence of a $C_{6}$-decomposition of $L(K_{n})$. For $n\equiv 1~(mod~2m)$, or $n\equiv 0$ or $2~(mod~ m)$ ($ m=2^{i}$), Cox and Rodger \cite{Cox2} showed that the obvious necessary condition for decomposing $L(K_{n})$  into $m$-cycles is also sufficient.  Recently, Ganesamurthy and Paulraja \cite{Ganesamurthy} characterized the condition for  $\lambda L(K_{n})$ to have a $C_{5}$-decomposition for $\lambda=1$ and $\lambda>1$ .

For the line graph of the complete bipartite graphs, Hoffman and Pike \cite{Hoffman} proved that $L(K_{m,n})$ has a $C_{4}$-decomposition if and only if one of the following conditions is satisfied: $(1)$ $m,n\equiv 0~(mod~2)$, $(2)$ $m,n\equiv 1~(mod~8)$, or $(3)$ $m,n\equiv 5~(mod~8)$.  In this paper, we obtain the necessary and sufficient condition for the existence of a $C_{4}$-decomposition of the $\lambda$-fold line graph of $K_{m,n}$ as follows.

\begin{thm}\label{thm1.1}
For all positive integers $m,n$ and $\lambda$ with $mn\geq 4$, the $\lambda$-fold line graph of $K_{m,n}$ has a $C_{4}$-decomposition if and only if

$(1)$ $2\mid \lambda (m+n-2)$,

$(2)$ $8\mid \lambda mn(m+n-2)$,

$(3)$ if $m$~$(resp. ~n)$~$\equiv 3~(mod~8)$ and $n$~$(resp. ~m)$~$\equiv 7~(mod~8)$, then $\lambda \equiv 0~(mod~2)$, and

$(4)$ if $m$~$(resp. ~n)$~$=2$ and $n$~$(resp. ~m)$~$\equiv 1~(mod~2)$, then $n$~$(resp. ~m)\geq5$ and $\lambda \equiv 0~(mod~8)$.
\end{thm}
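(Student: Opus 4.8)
The plan is to work throughout with the isomorphism $L(K_{m,n})\cong K_m\Box K_n$, so that the object to be decomposed is $\lambda(K_m\Box K_n)$, the $\lambda$-fold rook's graph on $mn$ vertices. Each vertex has degree $\lambda(m+n-2)$ and the graph has $\tfrac12\lambda mn(m+n-2)$ edges; since every vertex of a $C_4$ meets it in an even number of edges and each $C_4$ carries four edges, these two counts immediately force the parity condition $(1)$ and the divisibility condition $(2)$. The necessity of $(3)$ and $(4)$ is the delicate part of this direction: for the residues $m\equiv3$, $n\equiv7\pmod 8$ one has $8\mid m+n-2$, so $(1)$ and $(2)$ already hold for every $\lambda$ and a genuinely finer obstruction is needed. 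I would obtain it from a parity count of the $C_4$'s meeting a fixed column-clique $K_m$ (equivalently, from the fact that the Hoffman--Pike characterization rules out $\lambda=1$ and the same parity invariant persists for all odd $\lambda$). For $(4)$ the graph is the $K_n$-prism $K_2\Box K_n$, two copies of $K_n$ joined by a perfect matching; here I would analyse directly how a $C_4$ can meet the matching and the two cliques, showing both that $n=3$ admits no decomposition for any $\lambda$ and that odd $n$ forces $8\mid\lambda$.

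For sufficiency the key structural reduction is that, for fixed $m$ and $n$, conditions $(1)$--$(4)$ are each a congruence on $\lambda$ of the form $c_i\mid\lambda$ with $c_i\mid 8$; hence the set of admissible $\lambda$ is either empty (the excluded prism cases) or exactly the set of multiples of a single modulus $d=\operatorname{lcm}(c_1,c_2,c_3,c_4)\mid 8$. Because a $C_4$-decomposition of $d(K_m\Box K_n)$ yields one of $kd(K_m\Box K_n)$ for every $k\ge1$ by repeating each $C_4$ exactly $k$ times, it suffices to construct a decomposition only for the minimal admissible value $\lambda=d\in\{1,2,4,8\}$ in each residue class of $(m,n)$. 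The case $\lambda=1$ occurs precisely when $m,n$ are both even, or both $\equiv1\pmod 8$, or both $\equiv5\pmod 8$, and there the decomposition is supplied by the Hoffman--Pike theorem; so the real content is the explicit construction of $C_4$-decompositions of $2(K_m\Box K_n)$, $4(K_m\Box K_n)$ and $8(K_m\Box K_n)$ for the remaining residues.

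These constructions I would assemble from a small library of building blocks together with the edge partition $K_m\Box K_n=(n\,K_m)\oplus(m\,K_n)$ into its $n$ column-cliques and $m$ row-cliques. The blocks are: (i) the \emph{grid} $4$-cycles, one per choice of two rows and two columns, each using two row-clique edges and two column-clique edges and realising the $K_{2,2}$-subrooks; (ii) the fact that $K_{2a,2b}$ has a $C_4$-decomposition whenever both sides are even; (iii) the classical characterization of $C_4$-decompositions of $\lambda K_t$ under the usual degree and edge conditions; and (iv) $1$-factorizations of $K_m$ and $K_n$ when $m$ or $n$ is even. When a clique has even order its $\lambda$-fold splits off $C_4$'s internally by (iii)--(iv), while the grid cycles of (i) are used to absorb the imbalance between the $m\binom{n}{2}$ row edges and the $n\binom{m}{2}$ column edges; and when $\lambda$ is even, even the odd-order cliques admit internal $C_4$-decompositions of their $\lambda$-folds via (iii), which is what unlocks the odd cases at $\lambda=2,4,8$.

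The main obstacle I expect is precisely the all-odd, odd-residue regime together with the two exceptional families in $(3)$ and $(4)$. When $m$ and $n$ are both odd neither $K_m$ nor $K_n$ has a $1$-factorization and the cliques do not decompose into $C_4$'s on their own, so the decomposition must be built largely from mixed grid cycles whose bookkeeping has to exhaust the row edges and the column edges simultaneously; verifying that the grid cycles together with the admissible internal clique cycles partition the edge set exactly is where the residue arithmetic modulo $8$ enters and where the finer values $\lambda=2,4,8$ become forced. The boundary cases $m=2$ and $(m,n)\equiv(3,7)\pmod 8$ resist the generic construction and will require separate, hand-tailored decompositions, which is also what makes the necessity arguments for $(3)$ and $(4)$ indispensable rather than consequences of $(1)$--$(2)$.
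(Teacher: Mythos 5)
Your necessity argument is essentially the paper's: (1)--(2) from the degree and edge counts, (3) from a parity count of horizontal edges (every pure $4$-cycle uses an even number of them, every mixed $4$-cycle exactly two, while the total $\lambda mn(n-1)/2$ is odd when $\lambda,m$ are odd and $n\equiv 3\pmod 4$), and (4) from direct analysis of the prism $K_2\Box K_n$. One caution: your parenthetical shortcut for (3) --- that Hoffman--Pike ``rules out $\lambda=1$ and the same parity invariant persists for all odd $\lambda$'' --- is not an argument, since Theorem \ref{thm2.1} is a $\lambda=1$ statement and persistence of the obstruction must be proved by exhibiting the invariant; your primary suggestion (the parity count) is what actually works, and it is what the paper does. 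Your reduction of sufficiency to the minimal admissible $\lambda=d\in\{1,2,4,8\}$ (each condition being of the form $c_i\mid\lambda$ with $c_i\mid 8$, and a decomposition of $dG$ repeating to $kdG$) is sound and is the same organizing principle the paper uses implicitly.

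The genuine gap is in the constructive half. Your toolkit (i)--(iv) cannot settle the odd--odd residues, because for the minimal $\lambda$ the odd-order cliques typically admit no internal $C_4$-decomposition ($\lambda K_t$ with $\lambda=2$ decomposes only when $t\equiv 0,1\pmod 4$, so for $m,n\equiv 3\pmod 8$ at $\lambda=2$ neither clique splits off cycles internally), while a mixed $4$-cycle in $K_m\Box K_n$ is necessarily a grid cycle removing exactly one edge from each of two row cliques and two column cliques; the ``bookkeeping'' you defer is therefore the entire theorem. The paper supplies two mechanisms your plan lacks. First, explicit base constructions: $2L(K_{n,n})$ for odd $n$ is decomposed \emph{entirely} into mixed cycles by taking products $\overrightarrow{A_u}\times\overrightarrow{B_u}$ of oriented Hamilton cycles of the two cliques (Lemma \ref{lem3.1}), plus ad hoc decompositions of $2L(K_{3,4})$, $2L(K_{3,7})$, $4L(K_{3,5})$, $4L(K_{3,6})$, $8L(K_{2,5})$ (Lemmas \ref{lem3.2}--\ref{lem3.5}). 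Second, and crucially, an extension lemma from $(m,n)$ to $(m+8a,n)$ for odd $m,n$ (Lemma \ref{lem4.2}): one leaves a prescribed $2$-regular leave $C_l$ with $l\in\{3,5,6\}$ (according to $n\bmod 8$) in each new row clique, pairs leave edges across rows via four edge-disjoint $1$-factors of $K_{8a}$ to form mixed $4$-cycles, and then clears each column using Fu--Rodger (Theorem \ref{thm2.3}, $C_4$-decompositions of $K_t$ minus a $2$-regular leave) and Rodger--Sehgal (Theorem \ref{thm2.4}, $(K_{8x}-F)\vee K_1$ decomposable for suitable $F$). Without these results, or equivalent ideas, the step you yourself flag as the ``main obstacle'' does not go through: the proposal is a correct skeleton of the paper's approach, with the technical core missing.
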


\section{Preliminaries}

Here, we present several known results on the cycle decompositions of complete graphs and complete bipartite graphs that will be used in the proof of the main result.

\begin{thm}[\cite{Hoffman}]\label{thm2.1}
The graph $K_{m}\Box K_{n}$ with $mn\geq 4$ has a $C_{4}$-decomposition if and only if

$(1)$ $m, n\equiv 0~(mod~2)$,

$(2)$ $m, n\equiv 1~(mod~8)$, or

$(3)$ $m, n\equiv 5~(mod~8)$.
\end{thm}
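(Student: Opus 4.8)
Throughout I identify $\lambda L(K_{m,n})$ with $\lambda(K_m\Box K_n)$ and use that this multigraph is the edge-disjoint union of $m$ \emph{row-cliques} (each $\cong \lambda K_n$) and $n$ \emph{column-cliques} (each $\cong \lambda K_m$), a row edge joining two same-row vertices and a column edge two same-column vertices.

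\textbf{Necessity.} Conditions (1) and (2) are the arithmetic obstructions: each vertex has degree $\lambda(m+n-2)$, which must be even, giving (1); and $|E|=\tfrac12\lambda mn(m+n-2)$ must be a multiple of $4$, giving (2). For (3) I would first classify the $4$-cycles. Since a $C_4$ cannot use exactly one or exactly three edges of either type, every $C_4$ is a row-cycle (four row edges in one $K_n$), a column-cycle (four column edges in one $K_m$), or a grid-cycle (a rectangle using two row and two column edges). Writing $a,b,c$ for their numbers and counting row and column edges gives $4a+2c=R$ and $4b+2c=S$ with $R=\lambda m\binom n2$, $S=\lambda n\binom m2$; subtracting yields $4\mid R-S$, i.e. $8\mid\lambda mn(n-m)$. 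A short residue check modulo $8$ shows this already follows from (1),(2) unless $\{m\bmod 8,\,n\bmod 8\}=\{3,7\}$, in which case it reduces exactly to $2\mid\lambda$, which is (3). For (4), suppose $m=2$ (the case $n=2$ is symmetric) with $n$ odd. Each column-clique is a $K_2$ and contains no $4$-cycle, so $b=0$ and the column-edge equation forces $c=\tfrac12\lambda n$. As there are only two rows, every grid-cycle uses exactly one edge of the row-$0$ clique, so $4a_0+c=\lambda\binom n2$, whence $a_0=\tfrac18\lambda n(n-2)$; since $n$ is odd this forces $8\mid\lambda$. Finally a row-cycle needs four vertices in a row, so if $n=3$ then $a_0=0$ is forced while the count demands $a_0=\tfrac{3\lambda}8>0$, a contradiction, giving $n\ge5$.

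\textbf{Sufficiency.} Each of (1)--(4) is a lower bound on the $2$-adic valuation of $\lambda$, so the admissible $\lambda$ are exactly the multiples of some $\lambda_0=2^{V}$ with $V=V(m,n)\in\{0,1,2,3\}$. Because a $C_4$-decomposition of $\lambda_0(K_m\Box K_n)$ immediately yields one of $k\lambda_0(K_m\Box K_n)$ for every $k$ (take $k$ edge-disjoint copies), it suffices to decompose $\lambda_0(K_m\Box K_n)$. When $V=0$ the hypotheses reduce precisely to those of Theorem~\ref{thm2.1}, which supplies the decomposition. When $\lambda_0\ge2$ and both $m,n\ge4$ with the clique parameters admissible, I would decompose each row-clique $\lambda_0 K_n$ and each column-clique $\lambda_0 K_m$ separately via the classical characterization of $C_4$-decompositions of $\lambda K_p$, the row/column edge-partition then assembling these into a decomposition of the whole graph with no grid-cycles needed.

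\textbf{The main obstacle.} The real difficulty is the remaining cases, where the clique families cannot be decomposed on their own and grid-cycles must transfer a divisibility deficit between rows and columns. This occurs for small parameters $m$ or $n\in\{2,3\}$ (as $K_2,K_3$ admit no $4$-cycle) and, more subtly, for $\{m,n\}\equiv\{3,7\}\pmod 8$ with $\lambda_0=2$, where one checks that neither $2K_m$ nor $2K_n$ has a $C_4$-decomposition even though $2(K_m\Box K_n)$ must. My plan here is to group the $n$ columns into pairs, view each pair together with its connecting rungs as a prism $\lambda_0(K_m\Box K_2)$, and use grid-cycles in these prisms to absorb, in matched pairs, exactly the column-clique edges left over by the within-column decompositions, while the residual row edges are cleared by a $C_4$-decomposition of a complete bipartite remainder via Sotteau's theorem. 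The extremal case $m=2$, $n$ odd at $\lambda_0=8$ would rest on an explicit base decomposition of $8(K_2\Box K_n)$ that is then extended. I expect the bookkeeping that simultaneously clears all row-clique and all column-clique edges with a single consistent family of grid-cycles to be the principal technical hurdle.
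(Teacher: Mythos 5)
Your proposal proves the wrong statement. The statement under review is Theorem~\ref{thm2.1}, Hoffman and Pike's characterization of the \emph{simple} ($\lambda=1$) case $K_m\Box K_n$, which this paper imports from the literature and uses as an ingredient. What you sketch instead is the $\lambda$-fold main result: your ``conditions (1)--(4)'' are those of Theorem~\ref{thm1.1}, not the three congruence alternatives of Theorem~\ref{thm2.1}, and at the decisive point of your sufficiency argument (``When $V=0$ the hypotheses reduce precisely to those of Theorem~\ref{thm2.1}, which supplies the decomposition'') you invoke Theorem~\ref{thm2.1} itself. As a proof of Theorem~\ref{thm2.1} this is circular: the entire sufficiency content of the statement --- explicit $C_4$-decompositions of $K_m\Box K_n$ when $m,n$ are both even, both $\equiv 1\pmod 8$, or both $\equiv 5\pmod 8$ --- is exactly what you assume.

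Even setting the circularity aside, your constructive fallback would fail on two of the three sufficiency cases. Decomposing each row-clique $K_n$ and column-clique $K_m$ separately requires these cliques themselves to admit $C_4$-decompositions, which at $\lambda=1$ happens (by Theorem~\ref{thm2.2}) only when $m,n\equiv 1\pmod 8$: if $m,n$ are both even, every vertex has odd degree $n-1$ within its row-clique, and if $m,n\equiv 5\pmod 8$ the clique has $\binom{n}{2}\equiv 2\pmod 4$ edges, so mixed (grid) cycles are unavoidable --- and these are precisely the cases your plan defers to an unproven ``bookkeeping'' of prisms and Sotteau's theorem, which you yourself flag as the principal hurdle. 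That hurdle is the theorem. On the positive side, your necessity counting (the classification into row-, column-, and grid-cycles with $4a+2c=R$, $4b+2c=S$, and the $m=2$ analysis forcing $8\mid\lambda$ and $n\geq 5$) is sound and, specialized to $\lambda=1$, does recover the necessity half of Theorem~\ref{thm2.1}; indeed it is essentially the same pure/mixed parity count the paper uses to establish conditions (3) and (4) of Theorem~\ref{thm1.1}. But the sufficiency half of Theorem~\ref{thm2.1} remains entirely unestablished in your proposal.
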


\begin{thm}[\cite{Bryant}]\label{thm2.2}
Let $\lambda$, $n$ and $m$ be integers with $n$, $m\geq 3$ and $\lambda \geq 1$. There exists a $C_{m}$-decomposition of $\lambda K_{n}$  if and only if:

$(1)$ $m\leq n$,

$(2)$ $\lambda (n-1)$ is even,

$(3)$ $m$ divides $\lambda$$n\choose2$.
\end{thm}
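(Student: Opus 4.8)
I would verify the three conditions by elementary counting. For (1), each copy of $C_m$ occupies $m$ distinct vertices, all lying in the $n$-vertex set $V(K_n)$, so $m\le n$. For (2), every vertex of $\lambda K_n$ has degree $\lambda(n-1)$, and in any decomposition into cycles the edges incident to a fixed vertex split into pairs (two per cycle through that vertex), forcing $\lambda(n-1)$ to be even. For (3), $\lambda K_n$ has $\lambda\binom{n}{2}$ edges and each $C_m$ uses exactly $m$ of them, so the number of cycles $\lambda\binom{n}{2}/m$ must be a positive integer, i.e.\ $m\mid\lambda\binom{n}{2}$.

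\textbf{Sufficiency: overall strategy.} The substantial direction is sufficiency, and my plan is to reduce the $\lambda$-fold problem to the single-copy case. First I would invoke the resolution of Alspach's conjecture for a fixed cycle length: for $3\le m\le n$, the graph $K_n$ (when $n$ is odd) and the graph $K_n-I$ obtained by deleting a perfect matching $I$ (when $n$ is even) admit a $C_m$-decomposition precisely when $m$ divides the number of edges, these decompositions being due to Alspach--Gavlas for $m$ odd and \v{S}ajna for $m$ even. These would serve as the base cases. Condition (2) is exactly what guarantees that $\lambda K_n$ is regular of even degree, hence a legitimate candidate; when $n$ is even it forces $\lambda$ to be even, so I would pair the copies and exploit that $2K_n$ is even-regular, weaving back in the perfect matchings that the single-copy base cases are forced to delete.

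\textbf{Sufficiency: assembling the $\lambda$-fold decomposition.} The delicate point is that $m\mid\lambda\binom{n}{2}$ is weaker than $m\mid\binom{n}{2}$, so one cannot simply stack $\lambda$ disjoint copies of a single-copy decomposition. To get around this I would partition the vertex set $Z_n=V_1\cup\cdots\cup V_k$ and correspondingly write $\lambda K_n$ as an edge-disjoint union of smaller complete multigraphs $\lambda K_{|V_i|}$ together with $\lambda$-fold complete bipartite pieces $\lambda K_{|V_i|,|V_j|}$. The bipartite pieces I would decompose using known $C_m$-decompositions of $K_{a,b}$ (and their $\lambda$-folds), the smaller complete pieces by induction on $n$, and---this is the crucial bookkeeping step---the edges that refuse to group into full single copies would be absorbed into cycles crossing between blocks, chosen so that the global edge total $\lambda\binom{n}{2}$ is consumed exactly by length-$m$ cycles. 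For the short-cycle range, where $m$ is small relative to $n$, I would instead use rotational (difference-family) constructions over $Z_n$, building a small family of starter $m$-cycles whose orbit under the rotation $i\mapsto i+1$ covers every difference class with the correct multiplicity $\lambda$.

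\textbf{Main obstacle.} The hardest part will be the long-cycle regime, where $m$ is close to $n$: then each $C_m$ must use almost all of the vertices, leaving essentially no slack for the difference methods, and one is forced into the intricate explicit cycle systems of Alspach--Gavlas and \v{S}ajna, now adapted to edge multiplicity $\lambda$. A second genuine difficulty will be keeping the divisibility arithmetic consistent across all residues of $m$ and $n$ modulo the relevant small moduli, and disposing of the sporadic small configurations (small $n$, the extremes $m=3$, $m=4$, and $m=n$) by hand, since the inductive reductions break down there.
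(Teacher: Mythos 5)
You should first note that the paper does not prove this statement at all: Theorem~\ref{thm2.2} is imported verbatim from the cited work of Bryant, Horsley, Maenhaut and Smith~\cite{Bryant} and is used purely as a black box, so there is no internal proof to measure your attempt against; the only fair comparison is against what a complete proof would require. Your necessity argument is correct and standard (even degree at each vertex, edge count divisible by $m$, and $m\le n$ since an $m$-cycle needs $m$ distinct vertices), and it matches the folklore counting that any source would give.

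Your sufficiency direction, however, has a genuine gap --- indeed a concretely fatal step as written. Your plan partitions $Z_n$ into blocks and proposes to decompose the bipartite pieces $\lambda K_{|V_i|,|V_j|}$ ``using known $C_m$-decompositions of $K_{a,b}$''; but a bipartite graph contains no odd cycles, so for every odd $m$ (including the case $m=n$ odd, which is a core case of the theorem) these pieces admit no $C_m$-decomposition whatsoever, and the block strategy collapses unless every cross-block edge is absorbed into cycles meeting at least three blocks --- which is exactly the ``crucial bookkeeping step'' you flag but do not carry out. More broadly, everything hard in the theorem is deferred rather than done: the reduction from $m\mid\lambda\binom{n}{2}$ to usable base decompositions (note $m$ need not divide $\binom{n}{2}$, nor $2\binom{n}{2}$, so neither stacking nor pairing copies works in general), the rotational constructions for short cycles, and the long-cycle regime are each named as difficulties with no mechanism supplied. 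The actual proof in~\cite{Bryant} is a long and intricate argument built on top of the Alspach--Gavlas and \v{S}ajna theorems together with substantial new machinery for multiplicities $\lambda>1$; a sketch that ends by identifying where the difficulty lies establishes necessity only, and the sufficiency half must be regarded as unproved.
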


\begin{thm}[\cite{Fu}]\label{thm2.3}
Let $n$ be a positive integer, and let $F$ be a $2$-regular subgraph of $K_{n}$. There exists a $C_{4}$-decomposition of $K_{n}-E(F)$ if and only if $n$ is odd and $4$ divides the number of edges in $K_{n}-E(F)$.
\end{thm}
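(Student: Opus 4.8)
The plan is to work throughout with the isomorphism $\lambda L(K_{m,n})\cong\lambda(K_m\Box K_n)$ recorded in the introduction, and to exploit the splitting of $\lambda(K_m\Box K_n)$ into its $m$ \emph{row-cliques} (one copy of $\lambda K_n$ for each fixed first coordinate) and its $n$ \emph{column-cliques} (one copy of $\lambda K_m$ for each fixed second coordinate); these two families partition the edge set. Since the graph is $\lambda(m+n-2)$-regular with $\tfrac12\lambda mn(m+n-2)$ edges, and since every vertex of a $C_4$-decomposable graph has even degree while its number of edges must be divisible by $4$, necessity of (1) and (2) is immediate.

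For the two subtle necessity conditions I would first observe that a short coordinate-projection argument shows every $C_4$ of $K_m\Box K_n$ uses an even number of column-edges, so each $C_4$ is of exactly one type: \emph{pure-row} (four edges inside one row-clique), \emph{pure-column}, or \emph{grid} (two row- and two column-edges, meeting two rows and two columns in one edge each). Writing $r,c,g$ for the numbers of each type, counting row- and column-edges gives $4r+2g=\lambda m\binom n2$ and $4c+2g=\lambda n\binom m2$; subtracting yields $r-c=\tfrac18\lambda mn(n-m)$, whose integrality forces $8\mid\lambda mn(n-m)$. A quick residue check shows this is automatic from (1)--(2) except when $\{m,n\}\equiv\{3,7\}\pmod 8$, where it reduces exactly to $2\mid\lambda$; this is (3). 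For (4), when $m=2$ there are no pure-column $C_4$'s, so for a single row-clique $4r_0+g=\lambda\binom n2$ while the $\lambda n$ rung-edges give $2g=\lambda n$; hence $r_0=\tfrac18\lambda n(n-2)$, and $n$ odd forces $8\mid\lambda$. Moreover a pure-row $C_4$ needs $n\ge 4$, so $n=3$ makes $r_0=0$ mandatory, contradicting $r_0>0$; this yields $n\ge 5$ and completes (4).

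For sufficiency I would first note that, outside the excluded family $\{m,n\}=\{2,3\}$, conditions (1)--(4) say precisely that $\lambda$ must be a multiple of a single modulus $\lambda_0\in\{1,2,4,8\}$ determined by the residues of $m,n\bmod 8$; since $k$ disjoint copies of a decomposition of $\lambda_0(K_m\Box K_n)$ decompose $k\lambda_0(K_m\Box K_n)$, it suffices to treat $\lambda=\lambda_0$. When the residues already satisfy the hypotheses of Theorem~\ref{thm2.1} (both even, or both $\equiv1$, or both $\equiv5\pmod 8$) we have $\lambda_0=1$ and Theorem~\ref{thm2.1} finishes the case. When instead each of $\lambda_0 K_m$ and $\lambda_0 K_n$ satisfies Bryant's divisibility (Theorem~\ref{thm2.2}) we decompose every row-clique and every column-clique separately into $C_4$'s, using no grid cycles at all. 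The remaining cases are those in which some clique-multiple $\lambda_0 K_r$ fails Bryant's condition (for odd $r$ this happens when $r\equiv3\pmod 4$ and $\lambda_0\equiv2\pmod 4$, or simply when $r\le 3$); here the strategy is to delete from each offending clique a suitable $2$-regular subgraph $F$ so that $K_r-E(F)$ meets the divisibility requirement of Fu's Theorem~\ref{thm2.3} and decomposes into $C_4$'s, and then to reassemble the deleted $2$-factors of the rows together with those of the columns into grid $C_4$'s.

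The hard part will be exactly this reassembly in the last regime, that is, choosing the removed $2$-regular subgraphs in the rows and in the columns coherently so that (a) each residual clique lands in Fu's admissible range and (b) the leftover row-edges and column-edges pair up with no remainder into grid $4$-cycles. I expect this to require an explicit, parity-balanced design (for instance, matching a Hamiltonian-type $2$-factor of each row against a compatible $2$-factor of the columns), carried out case by case according to $m,n\bmod 8$ and the $2$-adic valuation of $\lambda_0$, and I would isolate the small-clique exceptions ($m$ or $n\in\{1,2,3\}$, in particular the prism $K_2\Box K_n$ forcing $8\mid\lambda$) as separate direct constructions, since there the missing $C_4$-types make the generic argument inapplicable.
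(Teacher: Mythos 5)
Your proposal does not address the statement you were asked to prove. The statement is Theorem~\ref{thm2.3} (Fu--Rodger): for $F$ a $2$-regular subgraph of $K_n$, the graph $K_n-E(F)$ has a $C_4$-decomposition if and only if $n$ is odd and $4$ divides $|E(K_n-E(F))|$. What you have written instead is a proof sketch of the paper's main result, Theorem~\ref{thm1.1}, on $C_4$-decompositions of $\lambda L(K_{m,n})$: your row-clique/column-clique counting, the necessity of conditions (1)--(4), and the reduction to $\lambda_0\in\{1,2,4,8\}$ all concern that theorem, not this one. Worse, your argument explicitly invokes ``Fu's Theorem~\ref{thm2.3}'' as an ingredient when handling the cliques that fail Bryant's divisibility condition --- so read as a proof of Theorem~\ref{thm2.3} itself, your proposal is circular: it assumes the very statement it is supposed to establish.

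For the record, the paper does not prove Theorem~\ref{thm2.3} either; it is imported verbatim from the cited reference of Fu and Rodger and used as a black box (in Lemma~\ref{lem4.2}, parts (2) and (4)). If you wanted to actually prove it, the necessity direction is short --- vertices of $K_n-E(F)$ have degree $n-1$ or $n-3$, so even degree forces $n$ odd, and the edge count must be divisible by $4$ --- but the sufficiency direction is the substantive content: one must construct a $C_4$-decomposition of $K_n-E(F)$ for \emph{every} admissible $2$-regular leave $F$, i.e.\ every disjoint union of cycles of lengths at least $3$ fitting inside $K_n$, which requires explicit case constructions of the kind carried out in the Fu--Rodger paper. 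Nothing in your proposal engages with arbitrary $2$-regular leaves; the $2$-factors you mention arise only as artifacts of your intended row/column reassembly for Theorem~\ref{thm1.1}, and even there you concede the pairing-up step is left open.
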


\begin{thm}[\cite{Sehgal}]\label{thm2.4}
There exists  a set $F=\{F(1),\ldots,F(4)\}$ of four edge-disjoint $1$-factors in $K_{8x}$ such that there exists a $C_{4}$-decomposition of $(K_{8x}-F)\vee K_{1}$.
\end{thm}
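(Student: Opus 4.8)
The plan is to first reinterpret the target graph. Since $K_{8x}\vee K_{1}=K_{8x+1}$, writing the apex vertex as $\infty$ gives $(K_{8x}-F)\vee K_{1}=K_{8x+1}-F$, where $F=F(1)\cup\cdots\cup F(4)$ is now a $4$-regular subgraph of $K_{8x+1}$ supported entirely on the $8x$ non-apex vertices (the apex $\infty$ is incident to none of the deleted edges). I would begin by recording the parity and divisibility checks that any $C_{4}$-decomposition forces: every non-apex vertex has degree $(8x-1)-4+1=8x-4$ and $\infty$ has degree $8x$, both even; and the edge count is $\binom{8x+1}{2}-16x=4x(8x-3)\equiv 0\pmod 4$. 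Thus the obvious necessary conditions hold for every $x\ge 1$, and the content of the statement is purely constructive: one must exhibit four edge-disjoint $1$-factors of $K_{8x}$ together with a $C_{4}$-decomposition of the remainder. Note that $8x+1\equiv 1\pmod 8$, so $K_{8x+1}$ itself is $C_{4}$-decomposable; the whole difficulty is that we must delete a prescribed $4$-regular piece and still decompose, and this deficiency is twice as large as the $2$-regular deficiency covered by Theorem~\ref{thm2.3}.

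My main approach is a rotational (difference-family) construction on the vertex set $\mathbb{Z}_{8x}\cup\{\infty\}$, with the cyclic group $\mathbb{Z}_{8x}$ acting by $i\mapsto i+1$ and fixing $\infty$. The edges of $K_{8x}$ fall into difference classes $1,2,\dots,4x$, where the diameter class $d=4x$ is a $1$-factor (a short orbit of $4x$ edges), each class $d\in\{1,\dots,4x-1\}$ is a single $8x$-cycle (a full orbit of $8x$ edges), and the apex edges $\{\infty,i\}$ form one further full orbit. I would choose the four deleted $1$-factors $F(1),\dots,F(4)$ to be invariant under a suitable index-$2$ subgroup and, crucially, to avoid the diameter class, so that the diameter $1$-factor is kept available as a flexible short-orbit resource. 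The surviving edges — the apex orbit together with the difference classes not consumed by $F$ — are then to be partitioned into $\mathbb{Z}_{8x}$-orbits of base $4$-cycles: a family of base cherries $(\infty,a,b,c)$ through $\infty$ to absorb the apex orbit, and interior base blocks whose four edge-differences tile the remaining difference multiset exactly once.

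The step I expect to be the main obstacle is a parity tension produced by the two exceptional features of the even modulus $8x$. On one hand, $\infty$ is a fixed point, so every apex $C_{4}$ must be a cherry $(\infty,a,b,c)$ consuming two \emph{adjacent} interior edges meeting at a centre $b$; exhausting the $8x$ apex edges forces exactly $4x$ such cherries, under which each vertex occurs once as an endpoint and the centres acquire an \emph{odd} local interior degree. On the other hand, the diameter class $d=4x$ is a short orbit of precisely $4x$ edges, and a base cycle using it is stabilised by the half-turn $i\mapsto i+4x$, so it develops into $4x$ rather than $8x$ cycles. These two irregularities — an odd contribution at the fixed point and a halved orbit at the diameter — are exactly what make a clean separation of $\infty$ from the interior impossible and put the apex-free residual, which lives on the even order $8x$, outside the reach of Theorem~\ref{thm2.3}.

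To defeat the obstacle I would let the two irregularities cancel: each of the $4x$ apex cherries is made to use one diameter edge $\{b,b+4x\}$ as one of its two interior edges, so that the $4x$ centres consume the $4x$ diameter edges and the count of the short orbit matches the count of cherries exactly. What then remains in the interior is a $\mathbb{Z}_{8x}$-invariant, even-degree subgraph of $K_{8x}$ on the non-apex vertices whose deficiency from $K_{8x}$ is controlled; I would finish it by reducing to the cocktail-party graph $K_{8x}$ minus a $1$-factor, which is $C_{4}$-decomposable since $8x/2=4x\equiv 0\pmod 4$, while any surplus full difference classes are developed directly into $4$-cycle orbits, with the required supply of $1$-factors and cycle structures guaranteed by Theorem~\ref{thm2.2}. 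The final verification — that the explicit base family covers the apex orbit, the short diameter orbit, and every full interior difference class exactly once, with no block repeated and with $F(1),\dots,F(4)$ genuinely edge-disjoint perfect matchings — is a finite, residue-by-residue computation that constitutes the bulk of the argument.
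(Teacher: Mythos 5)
A framing point first: the paper contains no proof of this statement at all --- Theorem~\ref{thm2.4} is imported verbatim from Rodger and Sehgal~\cite{Sehgal} --- so your attempt must stand on its own as a complete proof of the cited result, and it does not. What you give is a strategy outline, not a proof: the theorem is a pure existence-by-construction claim, and you never exhibit the four $1$-factors $F(1),\ldots,F(4)$ nor a single base $4$-cycle, explicitly deferring ``the final verification \ldots the bulk of the argument'' to an unspecified residue-by-residue computation. Your preliminary checks are fine --- the degree and edge-count conditions, the count of exactly $4x$ cycles through $\infty$, each vertex occurring once as a cherry endpoint so that every vertex loses odd interior degree, consistent with the odd interior degree $8x-5$ of $K_{8x}-F$ --- but they carry no constructive weight; that part is routine and the entire content of the theorem lies in what you omit.

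Moreover, the one concrete mechanism you do propose --- cancelling the fixed point $\infty$ against the short diameter orbit by letting each of the $4x$ cherries consume a distinct diameter edge $\{b,b+4x\}$ --- is incompatible with the rotational framework you set up. Matching $4x$ cherries to $8x$ apex edges forces you to develop a base cherry under the index-$2$ (even-translation) subgroup of $\mathbb{Z}_{8x}$; but diameter edges are parametrized by $b \bmod 4x$, and the even residues form an index-$2$ subgroup there as well, so the orbit of the base cherry's diameter edge covers only $2x$ of the $4x$ diameter edges, each with multiplicity $2$. Nor can you instead take a length-$4x$ orbit under the full group: a cherry $(\infty,a,b,c)$ stabilized by the half-turn $i\mapsto i+4x$ would need $b=b+4x$, so no base cycle through $\infty$ has a short orbit. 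Hence the advertised cancellation of the ``two irregularities'' cannot be realized as described. The closing reduction is likewise unsupported: after the cherries, the residual interior graph has degree $8x-5-(2c_v+1)$ at $v$ (where $c_v$ counts how often $v$ serves as a centre), which is not $K_{8x}$ minus a $1$-factor; Theorem~\ref{thm2.3} is unavailable since it requires odd order, and Theorem~\ref{thm2.2} concerns $\lambda K_n$, not these deficient residuals. (A minor slip besides: a difference class $d\in\{1,\ldots,4x-1\}$ is a full edge orbit of size $8x$, but it is a single $8x$-cycle only when $\gcd(d,8x)=1$.) In sum, the proposal correctly identifies the obstructions but neither constructs $F$ nor decomposes the remainder, and its key structural idea fails a counting check.
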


\begin{thm}[\cite{Lee}]\label{thm2.5}
For positive integers $\lambda, k, m$ and $n$ with $\lambda m \equiv \lambda n\equiv k\equiv 0~(mod~2)$ and $min\{m,n\}\geq k/2\geq 2$, the multigraph $\lambda K_{m,n}$ has a $C_{k}$-decomposition if one of the following conditions holds:

$(1)$~$\lambda$ is odd and $k$ divides $mn$,

$(2)$~$\lambda$ is even and $k$ divides $2mn$, or

$(3)$~$\lambda$ is even and $\lambda n$ or $\lambda m$ is divisible by $k$.
\end{thm}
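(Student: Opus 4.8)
# Proof Proposal for Theorem 1.1

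The plan is to prove necessity and sufficiency separately, with sufficiency decomposed according to the parity and residue classes of $m$, $n$, and $\lambda$. Throughout I would exploit the structural isomorphism $L(K_{m,n}) \cong K_m \Box K_n$, so that the $\lambda$-fold line graph is $\lambda(K_m \Box K_n)$. The vertex set is $X \times Y$ with $|X|=m$, $|Y|=n$; for each fixed $y \in Y$ the induced subgraph on $X \times \{y\}$ is a copy of $\lambda K_m$ (call it a \emph{row-clique}), and for each fixed $x \in X$ we get a copy of $\lambda K_n$ (a \emph{column-clique}). The total edge count is $\lambda\bigl[n\binom{m}{2} + m\binom{n}{2}\bigr] = \tfrac{\lambda mn(m+n-2)}{2}$, and every vertex has degree $\lambda(m+n-2)$.

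For \textbf{necessity}, conditions $(1)$ and $(2)$ are the obvious arithmetic obstructions: a $C_4$-decomposition partitions the edges into 4-cycles, so $4$ must divide the edge count, giving $8 \mid \lambda mn(m+n-2)$; and since each vertex lies on cycles contributing an even number to its degree, $\lambda(m+n-2)$ must be even, giving $(1)$. Conditions $(3)$ and $(4)$ are the subtle obstructions. For $(4)$, when $m=2$ the graph $\lambda(K_2 \Box K_n)$ is a $\lambda$-fold prism over $K_n$; here I would argue that the column-cliques are merely $\lambda K_2$ (double edges), forcing most 4-cycles to use both row-copies, and a careful parity/counting argument on how 4-cycles can cover a graph of this restricted structure yields both $n \geq 5$ and $8 \mid \lambda$. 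For $(3)$, when $\{m,n\} \equiv \{3,7\} \pmod 8$ with $\lambda$ odd, I expect the obstruction to come from refining the divisibility analysis modulo $8$ within each clique factor, showing the single-fold case cannot balance against both $K_m$ and $K_n$ having odd order of these particular residues.

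For \textbf{sufficiency}, I would split into cases and build decompositions from the available tools. When $m,n$ are both even, Theorem 2.1 gives a $C_4$-decomposition of $K_m \Box K_n$ itself, which lifts to $\lambda(K_m \Box K_n)$ for every $\lambda$. When $\lambda$ is even, the natural strategy is to decompose $\lambda(K_m \Box K_n)$ as $\bigl[\lambda(K_m \Box K_n)\bigr]$ using Theorem 2.2 on the row- and column-cliques ($\lambda K_m$ and $\lambda K_n$ separately whenever $C_4$ divides the relevant $\lambda\binom{\cdot}{2}$) together with Theorem 2.5 applied to the $\lambda K_{a,b}$-type bipartite pieces that arise when pairing two cliques. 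The odd-$\lambda$ cases with $m,n \equiv 1$ or $5 \pmod 8$ reduce, since $C_4$-decomposability passes from $L(K_{m,n})$ (Theorem 2.1 cases $(2)$,$(3)$) to its $\lambda$-fold, so only $\lambda \equiv 1$ need fresh attention, handled by combining the base decomposition with a $C_4$-decomposition of $(\lambda-1)(K_m \Box K_n)$.

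The \textbf{main obstacle} will be the mixed-parity cases, especially where one of $m,n$ is odd and $\lambda$ is odd but the global conditions $(1)$–$(4)$ are satisfied. Here neither Theorem 2.1 (which needs both even or both $\equiv 1,5 \pmod 8$) nor a clean even-$\lambda$ splitting applies directly. My plan is to decompose the edge set into three parts: the row-cliques $\{\lambda K_m\}$, the column-cliques $\{\lambda K_n\}$, and — where a single clique's edge count is not divisible by $4$ — to peel off a $2$-regular factor $F$ from each offending odd clique and use Theorem 2.3 to $C_4$-decompose the remainder $K_n - E(F)$, then reassemble the leftover factors across the product structure into additional $C_4$'s, invoking Theorem 2.4 for the $8x$-sized blocks that require a $1$-factor-plus-join construction. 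Verifying that the leftover $2$-factors from different cliques can be matched up consistently into genuine $4$-cycles spanning both coordinates — and that the residue conditions $(3)$,$(4)$ are exactly what make this matching possible — is the delicate combinatorial heart of the argument.
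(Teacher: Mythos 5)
There is a fundamental mismatch here: the statement you were asked to prove is Theorem~2.5, which asserts sufficient conditions for the complete bipartite \emph{multigraph} $\lambda K_{m,n}$ (not a line graph) to admit a $C_k$-decomposition for general even $k$, but your entire write-up is a plan for proving Theorem~1.1, the paper's main result on $\lambda L(K_{m,n})$. Nothing in your proposal addresses $\lambda K_{m,n}$ or a general cycle length $k$; worse, your sketch explicitly \emph{invokes} Theorem~2.5 as a black box (``together with Theorem~2.5 applied to the $\lambda K_{a,b}$-type bipartite pieces''), so even read charitably your argument is circular as a proof of the statement in question. For the record, the paper itself offers no proof of Theorem~2.5 either: it is quoted verbatim from Lee (Decomposition of the complete bipartite multigraph into cycles and stars, Discrete Math.\ 338 (2015) 1362--1369) and used as an external tool. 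A genuine proof would have to construct $C_k$-decompositions of bipartite multigraphs directly --- for instance, reducing condition $(1)$ to Sotteau-type results for $C_k \mid K_{m,n}$ in the simple case, and handling conditions $(2)$ and $(3)$ by decomposing $2K_{m,n}$ into $k$-cycles built from pairs of $(k/2)$-paths or from Hamilton-type structures --- none of which appears in your proposal.

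Separately, even judged as an outline of Theorem~1.1, what you have is a plan rather than a proof: the necessity arguments for conditions $(3)$ and $(4)$ are stated as expectations (``I expect the obstruction to come from refining the divisibility analysis''), whereas the paper settles them with a concrete parity count of horizontal edges (each pure $4$-cycle uses an even number of horizontal edges and each mixed one exactly two, forcing $\tfrac{\lambda m n(n-1)}{2}$ to be even), and the sufficiency cases rest on specific small decompositions ($2L(K_{3,4})$, $4L(K_{3,5})$, $8L(K_{2,5})$, etc.) and two induction lemmas (Lemmas~4.1 and~4.2) that your sketch gestures at but does not supply. You should restart from the actual statement of Theorem~2.5 and either cite Lee's paper, as the authors do, or develop the bipartite multigraph constructions it requires.
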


\section{Cycle decompositions for small $m$, $n$, $\lambda$}
In this section, we obtain $C_{4}$-decompositions of $\lambda L(K_{m,n})$ for some small  $m$, $n$ and $\lambda$.

\begin{lem}\label{lem3.1}
If $n\equiv 1~(mod~2)$ and $n\geq3$, then $2L(K_{n,n})$ has a $C_{4}$-decomposition.
\end{lem}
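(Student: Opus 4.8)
The plan is to exploit the isomorphism $L(K_{n,n}) \cong K_n \Box K_n$ recorded above, so that it suffices to produce a $C_4$-decomposition of the doubled rook's graph $2(K_n \Box K_n)$ for every odd $n \geq 3$. Writing the vertex set as $Z_n \times Z_n$, the edges of $K_n \Box K_n$ split into the row-edges (joining cells sharing a first coordinate) and the column-edges (sharing a second coordinate), and these two families are edge-disjoint. Since $n$ is odd, $K_n$ admits a Hamilton decomposition into $t = (n-1)/2$ Hamilton cycles $H_1, \ldots, H_t$; applying this cyclic order inside every row turns the row-edges into edge-disjoint layers $\tilde H_1, \ldots, \tilde H_t$, where each $\tilde H_p$ is a disjoint union of $n$ spanning $n$-cycles (one per row). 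Doing the same inside every column yields layers $\tilde H'_1, \ldots, \tilde H'_t$ of the column-edges.

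The key step is to regroup these layers before decomposing. Because there are equally many row-layers and column-layers, I would pair $\tilde H_p$ with $\tilde H'_p$ and observe that $\tilde H_p \oplus \tilde H'_p \cong C_n \Box C_n$: the $n$ horizontal cycles and the $n$ vertical cycles fit together as the Cartesian product of two $n$-cycles, i.e.\ a torus. Passing to the doubled graph, $2(K_n \Box K_n) = \bigoplus_{p=1}^{t} \bigl(2\tilde H_p \oplus 2\tilde H'_p\bigr) = \bigoplus_{p=1}^{t} 2\bigl(\tilde H_p \oplus \tilde H'_p\bigr) \cong \bigoplus_{p=1}^{t} 2(C_n \Box C_n)$, so the whole problem reduces to decomposing a single doubled torus $2(C_n \Box C_n)$.

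For this last task I would use the unit squares of the torus: for each cell $(i,j)$ let $S_{i,j}$ be the $4$-cycle $\bigl((i,j),(i,j+1),(i+1,j+1),(i+1,j)\bigr)$, indices taken mod $n$. I would check that, with $n \geq 3$, every edge of $C_n \Box C_n$ lies in exactly two of the $n^2$ unit squares (each horizontal edge is the bottom of one square and the top of another, and dually for vertical edges). Hence taking all $n^2$ squares once covers each edge exactly twice, which is precisely $2(C_n \Box C_n)$; this gives the desired $C_4$-decomposition, and assembling over $p$ yields a $C_4$-decomposition of $2(K_n \Box K_n) = 2L(K_{n,n})$. The points demanding the most care are verifying that the paired layers genuinely form a torus (which needs each $H_p$ to be a single spanning cycle, guaranteed by the Hamilton decomposition) and the exactly-twice incidence count that makes the unit squares edge-disjoint in the doubled torus; the hypothesis $n \geq 3$ enters to ensure each $S_{i,j}$ is a genuine $4$-cycle on four distinct vertices and that the two squares sharing a given edge are distinct. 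A pleasant feature of this route is that it is uniform in $n$ and needs no split into the residue classes of $n$ modulo $4$ or $8$.
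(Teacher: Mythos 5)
Your proposal is correct, and it is essentially the paper's own proof in different clothing: the paper likewise pairs the $u$-th Hamilton cycle of one side with the $u$-th Hamilton cycle of the other and, via an orientation of each pair, takes exactly the $n^2$ axis-aligned squares of the resulting torus $A_u \Box B_u \cong C_n \Box C_n$ --- its family $\overrightarrow{A_u}\times\overrightarrow{B_u}$ is precisely your set of unit squares $S_{i,j}$ after relabelling. The only cosmetic difference is the verification (the paper checks coverage plus an edge count, while you check the exactly-twice incidence of edges in squares), so the two arguments coincide.
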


\begin{proof}
Let $K_{n,n}$ be defined with bipartition $X$ and $Y$. We suppose that $K_{X}$ and $K_{Y}$ are the complete graphs on vertex sets $X$ and $Y$, respectively. Since $n\equiv 1~(mod~2)$, $K_{X}$ and $K_{Y}$ have Hamilton decompositions. Let $A=\{A_{u}\mid1\leq u\leq \frac{n-1}{2}\}$ and $B=\{B_{u}\mid1\leq u\leq \frac{n-1}{2}\}$ be Hamilton decompositions of $K_{X}$ and $K_{Y}$. We assign an orientation to the cycles $A_{u}\in A$ and $B_{u}\in B$ for each $1\leq u\leq \frac{n-1}{2}$, and denote those by $\overrightarrow{A_{u}}$ and $\overrightarrow{B_{u}}$. For each $1\leq u\leq \frac{n-1}{2}$, we define $\overrightarrow{A_{u}}\times \overrightarrow{B_{u}}=\{(\{a,x\},\{x,b\},\{b,y\},\{y,a\})\mid(a,b)\in \overrightarrow{A_{u}}, (x,y)\in \overrightarrow{B_{u}}\}$ and $C=\cup_{u=1}^{\frac{n-1}{2}}\overrightarrow{A_{u}}\times \overrightarrow{B_{u}}$. Let $E(C)$ be the set of edges of $C$. For any edge $\{\{i,j\},\{j,k\}\}$ of $2L(K_{n,n})$ with $i, k\in X$, $j\in Y$, there  exists a  $\overrightarrow{A_{u}}$ such that an arc $(i,k)$ or $(k,i)$ lies in $\overrightarrow{A_{u}}$, and there exist vertices $s,t\in Y$ such that arcs $(s,j)$, $(j,t)$ lie in $\overrightarrow{B_{u}}$. If $(i,k)$ lies in $\overrightarrow{A_{u}}$, then $(\{i,j\},\{j,k\},\{k,t\},\{t,i\})\in C$. If $(k,i)$ lies in $\overrightarrow{A_{u}}$, then $(\{k,s\},\{s,i\},\{i,j\},\{j,k\})\in C$. Similarly, any edge $\{\{i,j\},\{j,k\}\}$ of $2L(K_{n,n})$ with $i, k\in Y$, $j\in X$ belongs to $E(C)$. Thus, $E(2L(K_{n,n}))\subseteq E(C)$. Note that $|E(2L(K_{n,n}))|=2n^{2}(n-1)=|E(C)|$. Hence, $2L(K_{n,n})$ has a $C_{4}$-decomposition.
\end{proof}

\begin{lem}\label{lem3.2}
The graphs $2L(K_{3,4})$ and $2L(K_{3,7})$ have a $C_{4}$-decomposition.
\end{lem}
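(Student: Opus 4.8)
The plan is to exploit the clique structure of $L(K_{3,n})\cong K_{3}\Box K_{n}$. Writing each vertex as a pair $(i,j)$ with $i\in\{0,1,2\}$ and $j\in\{0,1,\dots,n-1\}$, the graph splits into three \emph{layers} (the copies of $K_{n}$ obtained by fixing $i$) together with $n$ vertex-disjoint triangles (the copies of $K_{3}$ obtained by fixing $j$), and in $2L(K_{3,n})$ every edge is doubled. Since the triangles are vertex-disjoint and have only three vertices, no $4$-cycle can lie inside the triangle part, so I would cover all triangle edges by ``rectangle'' $4$-cycles $((i,j),(i',j),(i',j'),(i,j'))$, each using two triangle edges (one in column $j$ and one in column $j'$, for the same layer-pair $\{i,i'\}$) and two layer edges. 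Choosing these rectangles amounts to choosing, for each of the three layer-pairs $\{i,i'\}$, a $2$-regular multigraph $H_{\{i,i'\}}$ on the column set $\{0,\dots,n-1\}$: an edge $\{j,j'\}$ of $H_{\{i,i'\}}$ is one rectangle, and $2$-regularity guarantees that each (doubled) triangle edge is covered exactly twice. The layer edges consumed in layer $i$ are exactly the image of $H_{\{i,i'\}}\cup H_{\{i,i''\}}$, a $4$-regular multigraph on the columns, so it remains to $C_{4}$-decompose, in each layer, the graph $2K_{n}$ minus this $4$-regular multigraph.

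For $n=4$ this is clean. Let $F_{1},F_{2},F_{3}$ be the three distinct $4$-cycles on the columns $\{0,1,2,3\}$; each edge of $K_{4}$ lies in exactly two of them, so as a multiset $F_{1}\cup F_{2}\cup F_{3}=2K_{4}$. Taking $H_{\{0,1\}}=F_{1}$, $H_{\{0,2\}}=F_{2}$, $H_{\{1,2\}}=F_{3}$, the rectangle-usage in layer $0$ is $F_{1}\cup F_{2}$, whose complement in $2K_{4}$ is exactly $F_{3}$, a single $4$-cycle; likewise the leftovers in layers $1$ and $2$ are $F_{2}$ and $F_{1}$. Thus each layer contributes one additional $4$-cycle, and together with the $3\cdot 4=12$ rectangles this yields the required $15$ cycles, settling $2L(K_{3,4})$.

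For $n=7$ the leftover in each layer is $2K_{7}$ minus a $4$-regular multigraph, a $28$-edge graph, and this is the main obstacle. One cannot reduce it to Theorem~\ref{thm2.3}: because the removed graph is forced to be $4$-regular, any splitting $2K_{7}=K_{7}\oplus K_{7}$ removes a spanning $2$-factor from each copy and leaves $K_{7}$ minus a $2$-factor, with $21-7=14$ edges, not a multiple of $4$; hence Theorem~\ref{thm2.3} cannot be applied copy by copy (and $2K_{7}$ itself has no $C_{4}$-decomposition). The leftover must therefore be decomposed as a whole. I would instead treat the entire graph by a rotational construction: $2(K_{3}\Box K_{7})$ is invariant under the free $\mathbb{Z}_{7}$-action $(i,j)\mapsto(i,j+1)$, so its $168$ edges fall into $24$ orbits of length $7$, and a full $C_{4}$-decomposition (into $42$ cycles) corresponds to $6$ base $4$-cycles whose $\mathbb{Z}_{7}$-orbits cover each of the $24$ edge-orbits exactly once. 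Exhibiting these $6$ base cycles — equivalently, writing down the explicit decomposition — is the one genuinely computational step and I expect it to be the crux of the $K_{3,7}$ case, the $K_{3,4}$ case already being handled cleanly by the $F_{1},F_{2},F_{3}$ construction above.
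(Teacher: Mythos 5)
Your $2L(K_{3,4})$ argument is correct and complete. Since every $4$-cycle in $2(K_{3}\Box K_{n})$ with a vertical edge is forced to be a rectangle (a column has only $3$ vertices), your framework of $2$-regular column multigraphs $H_{\{i,i'\}}$ is sound, and the choice $H_{\{0,1\}}=F_{1}$, $H_{\{0,2\}}=F_{2}$, $H_{\{1,2\}}=F_{3}$ with the three Hamilton cycles of $K_{4}$ checks out: $12$ rectangles plus $3$ pure cycles give $15$ cycles covering all $60$ edges. This is in fact exactly the shape of the paper's explicit list (twelve mixed rectangles and three pure layer cycles), so here you have rederived the paper's decomposition structurally rather than by brute force.

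The $2L(K_{3,7})$ case, however, has a genuine gap: you state a plan but prove nothing. Your orbit bookkeeping under the $\mathbb{Z}_{7}$-action ($24$ edge-orbits of length $7$, so $6$ base $4$-cycles would suffice) is only a necessary-condition count; nothing in your argument shows such base cycles exist, and you explicitly defer exhibiting them. As written, the lemma is unestablished for $K_{3,7}$. Note also that the search is unnecessary: the paper avoids any fresh computation by splitting the $7$-side as $Y_{1}\cup Y_{2}$ with $|Y_{1}|=4$, $|Y_{2}|=3$, which yields $2L(K_{3,7})=G_{1}\oplus G_{2}\oplus H_{1}\oplus H_{2}\oplus H_{3}$ with $G_{1}\cong 2L(K_{3,4})$, $G_{2}\cong 2L(K_{3,3})$, and $H_{i}\cong 2K_{3,4}$ (the cross edges inside each clique $K_{3,7}(i)\cong 2K_{7}$ between $Y_{1}$-labelled and $Y_{2}$-labelled vertices). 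Your own $K_{3,4}$ construction handles $G_{1}$; Lemma~\ref{lem3.1} handles $G_{2}$ (indeed your rectangle framework with all $H_{\{i,i'\}}=K_{3}$ also does, giving nine rectangles); and Theorem~\ref{thm2.5}(2) with $\lambda=2$ and $4\mid 2\cdot 3\cdot 4$ handles each $2K_{3,4}$. Your observation that the per-layer leftover $2K_{7}$ minus a $4$-regular multigraph cannot be reached via Theorem~\ref{thm2.3} is correct as far as it goes, but the right conclusion is to abandon the per-layer reduction for $n=7$ in favor of this splitting, not to launch an unverified rotational construction.
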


\begin{proof}
Let $V(2L(K_{3,4}))=Z_{3}\times Z_{4}$. The following is a $C_{4}$-decomposition of $2L(K_{3,4})$:
$((0,0),(0,1),\\(1,1),(1,0))$, $((0,0),(0,2),(1,2),(1,0))$,
$((0,2),(0,3),(1,3),(1,2))$, $((0,1),(0,3),(1,3),(1,1))$,
$((1,0),\\(1,2),(2,2),(2,0))$, $((1,0),(1,3),(2,3),(2,0))$,
$((1,1),(1,2),(2,2),(2,1))$, $((1,1),(1,3),(2,3),(2,1))$,\\
$((0,0),(0,1),(2,1),(2,0))$, $((0,0),(0,3),(2,3),(2,0))$,
$((0,1),(0,2),(2,2),(2,1))$, $((0,2),(0,3),(2,3),\\(2,2))$,
$((0,0),(0,2),(0,1),(0,3))$, $((1,0),(1,1),(1,2),(1,3))$,
$((2,0),(2,1),(2,3),(2,2))$.

Suppose $X=\{1,2,3\}$, $Y_1=\{4,5,6,7\}$ and $Y_2=\{8,9,10\}$. Let $K_{3,7}=(X, Y_1\cup Y_2)$, $A_i=\{\{i,j\}\in E(K_{3,7})\mid j\in Y_1\cup Y_2\}$ for $1\leq i\leq3$, and $G_k=2L(K_{3,7}[X\cup Y_k])$ for $1\leq k\leq2$.
The induced subgraph of $(2L(K_{3,7})-E(G_1\cup G_2))[A_i]$ is denoted by $H_i$ for $1\leq i\leq3$. Then $2L(K_{3,7})=(\oplus_{k=1}^2 G_k)\oplus(\oplus_{i=1}^3 H_i)$. Note that $G_1\cong 2L(K_{3,4})$, $G_2\cong2L(K_{3,3})$ and
$H_{i}\cong 2K_{3,4}$ for $1\leq i\leq3$. Thus, by Lemma \ref{lem3.1} and Theorem \ref{thm2.5}, $2L(K_{3,7})$ has a $C_{4}$-decomposition.
\end{proof}

\begin{lem}\label{lem3.3}
The graph $4L(K_{3,5})$ has a $C_{4}$-decomposition.
\end{lem}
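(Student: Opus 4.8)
The plan is to exploit the isomorphism $L(K_{3,5})\cong K_3\Box K_5$ recorded in the introduction, and to organise everything around the obvious clique structure of the Cartesian product. Writing the vertex set as $Z_3\times Z_5$, the edges of $4L(K_{3,5})$ split into three \emph{row cliques} (copies of $4K_5$ on the sets $\{i\}\times Z_5$, $i\in Z_3$) and five \emph{column cliques} (copies of $4K_3$ on the sets $Z_3\times\{j\}$, $j\in Z_5$). The column cliques are the source of the difficulty: a $C_3$ has no $4$-cycle of its own, so every $C_4$ meeting a column must be a ``grid'' cycle that also uses two row edges. I would therefore first clear all the column edges by grid cycles and then mop up the surviving row edges, checking that the residue in each row is something Theorem~\ref{thm2.2} can decompose.

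First I would build the grid cycles, indexed by a pair of rows together with a pair of columns. For each $2$-subset $\{i,i'\}\subseteq Z_3$ and each $2$-subset $\{j,j'\}\subseteq Z_5$, take the $4$-cycle $((i,j),(i,j'),(i',j'),(i',j))$; there are $3\cdot\binom{5}{2}=30$ of these. Each such cycle consumes one copy of the two column edges $\{(i,j),(i',j)\}$ and $\{(i,j'),(i',j')\}$ (both joining rows $i$ and $i'$) together with one copy of the two row edges $\{(i,j),(i,j')\}$ and $\{(i',j),(i',j')\}$.

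The crux is the multiplicity bookkeeping, and it works out exactly because of the numerics $\lambda=4$, $m=3$, $n=5$. Fix a column $j$ and the column edge joining $(i,j)$ and $(i',j)$: it is used precisely by the grid cycles with row-pair $\{i,i'\}$ and a column-pair containing $j$, and exactly $4$ of the $\binom{5}{2}$ column-pairs contain $j$, so all four parallel copies of each column edge are used up. Hence the $30$ grid cycles exhaust the five column cliques completely. A fixed row edge $\{(i,j),(i,j')\}$, on the other hand, is used only by the grid cycles whose row-pair contains $i$ and whose column-pair is $\{j,j'\}$; since exactly $2$ of the $3$ row-pairs contain $i$, precisely $2$ of its $4$ copies are consumed. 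Consequently, after the grid cycles are removed, each of the three row cliques reduces to a copy of $2K_5$.

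Finally I would decompose each residual $2K_5$ into $4$-cycles via Theorem~\ref{thm2.2}: with $\lambda=2$, $n=5$, $m=4$ the conditions $m\le n$, $\lambda(n-1)$ even, and $m\mid\lambda\binom{n}{2}$ all hold, giving $5$ cycles per row and $15$ in total. Together with the $30$ grid cycles this produces $45=|E(4L(K_{3,5}))|/4$ edge-disjoint $4$-cycles, i.e.\ a $C_4$-decomposition. The only step that needs genuine care is the counting above: the construction succeeds exactly because each column lies in $4$ column-pairs (matching $\lambda=4$, so the problematic $4K_3$ column cliques are cleared) while each row lies in $2$ row-pairs (leaving behind precisely the $2K_5$ handled by Theorem~\ref{thm2.2}); verifying that the grid cycles are pairwise edge-disjoint as a multiset of edges follows from the same counts.
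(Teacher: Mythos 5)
Your proof is correct, and the arithmetic at its heart checks out: each of the $15$ column edges lies in exactly $4$ of your $30$ grid cycles (one per column-pair containing its column), so the $4K_3$ column cliques are exhausted, while each row edge lies in exactly $2$ (one per row-pair containing its row), leaving $2K_5$ in each row; Theorem~\ref{thm2.2} applies to $2K_5$ since $4\le 5$, $2\cdot 4$ is even, and $4\mid 2\binom{5}{2}=20$, and the count $30+3\cdot 5=45=|E(4L(K_{3,5}))|/4$ confirms nothing is left over.

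The overall strategy is the same as the paper's --- remove $30$ mixed $4$-cycles that absorb all column edges, then decompose the residual row multigraphs --- but your execution differs in a way worth noting. The paper chooses its $30$ mixed cycles by circulant differences ($C_1^i,\dots,C_6^i$ over $i\in Z_5$), which clears the columns but consumes row edges \emph{unevenly}: rows $0$ and $2$ are left with the multigraph $K_5+E(2C_5)$ (one difference class of multiplicity $3$, the other of multiplicity $1$), forcing an ad hoc explicit list of five $4$-cycles, while only row $1$ reduces to $2K_5$ and gets Theorem~\ref{thm2.2}. Your symmetric choice --- one grid cycle for every (row-pair, column-pair) combination --- spreads the row-edge consumption uniformly, so all three rows reduce to $2K_5$ and a single citation of Theorem~\ref{thm2.2} finishes the proof with no hand-built decomposition. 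That uniformity is a genuine simplification, and as you observe it also makes transparent why the construction is tuned to $\lambda=4$, $m=3$: the $4$ column-pairs through a column match the multiplicity $4$, and the $2$ row-pairs through a row leave exactly the multiplicity-$2$ residue that Theorem~\ref{thm2.2} handles.
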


\begin{proof}
Let $V(4L(K_{3,5}))=Z_{3}\times Z_{5}$. Note that $4L(K_{3,5})\cong4(K_{3}\square K_{5})$. For $0\leq i\leq 4$, let $C_{1}^{i}=C_{4}^{i}=((0,i),(0,1+i),(1,1+i),(1,i))$, $C_{2}^{i}=C_{5}^{i}=((1,i),(1,2+i),(2,2+i),(2,i))$, $C_{3}^{i}=((0,i),(0,1+i),(2,1+i),(2,i))$ and $C_{6}^{i}=((0,i),(0,2+i),(2,2+i),(2,i))$ be edge disjoint $4$-cycles of $4L(K_{3,5})$. Let $C=\cup _{j=1}^{6}(\cup _{i=0}^{4}C_{j}^{i})$. Clearly, $4L(K_{3,5})-E(C)=H_{1}\oplus H_{2}\oplus H_{3}$, where $V(H_{1})=\{0\}\times Z_{5}$, $V(H_{2})=\{1\}\times Z_{5}$, $V(H_{3})=\{2\}\times Z_{5}$, $H_{1}\cong H_{3}\cong K_{5}+E(2C_5)$ and $H_{2}\cong 2K_{5}$. Note that $2K_{5}$ admits a $C_{4}$-decomposition by Theorem \ref{thm2.2}. We give a $C_{4}$-decomposition of $H_{1}$ below: $((0,0),(0,2),(0,4),(0,1))$, $((0,1),(0,3),(0,0),(0,2))$, $((0,2),(0,4),(0,1),(0,3))$, $((0,0),(0,2),(0,4),(0,3))$,$((0,0),(0,3),(0,1),(0,4))$.

Hence, $4L(K_{3,5})$ admits a $C_{4}$-decomposition.
\end{proof}

\begin{lem}\label{lem3.4}
The graph $8L(K_{2,n})$ has a $C_{4}$-decomposition, where $n\equiv 1~(mod~2)$ and $n\geq5$.
\end{lem}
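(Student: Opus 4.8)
The plan is to use the isomorphism $L(K_{2,n})\cong K_{2}\Box K_{n}$ and to view $8L(K_{2,n})$ concretely on the vertex set $\{0,1\}\times Z_{n}$: it consists of two ``levels'', each carrying a copy of $8K_{n}$ (on $\{i\}\times Z_{n}$, $i=0,1$), together with $8$ parallel copies of the perfect matching $\{\{(0,j),(1,j)\}\mid j\in Z_{n}\}$ joining the levels. A matching edge runs between the levels, so it cannot appear in a $4$-cycle lying inside a single level; I would therefore consume all matching edges by \emph{prism} $4$-cycles of the form $((0,j),(0,k),(1,k),(1,j))$, each of which uses one matching edge at $j$, one matching edge at $k$, one level-$0$ edge $\{(0,j),(0,k)\}$ and one level-$1$ edge $\{(1,j),(1,k)\}$. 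Encoding the chosen pairs $\{j,k\}$ as the edges (with multiplicity) of a multigraph $M$ on $Z_{n}$, the $8n$ matching edges are used up exactly when every vertex of $M$ has degree $8$, i.e.\ when $M$ is $8$-regular with all multiplicities at most $8$. After removing these prism cycles, each level is left with $8K_{n}-M$. Hence the whole problem reduces to the statement $P(n)$: \emph{there is an $8$-regular submultigraph $M$ of $8K_{n}$ (multiplicities $\le 8$) such that $8K_{n}-M$ has a $C_{4}$-decomposition.}

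I would prove $P(n)$ for all odd $n\ge 5$ by induction, decreasing $n$ by $2$. For the base case $n=5$ take $M=2K_{5}$, which is $8$-regular; then $8K_{5}-M=6K_{5}$, and Theorem~\ref{thm2.2} (with $\lambda=6$, cycle length $4\le 5$, $\lambda(n-1)=24$ even and $4\mid 6\binom{5}{2}=60$) supplies the required $C_{4}$-decomposition. For the inductive step let $n\ge 7$ be odd and split $Z_{n}=A\cup B$ with $|A|=2$ and $|B|=n-2\ge 5$. Writing $8K_{n}=8K_{A}\oplus 8K_{B}\oplus 8K_{A,B}$, I would take $M=M_{A}\cup M_{B}$, where $M_{A}=8K_{A}$ (the eight parallel edges on the pair $A$, which gives both vertices of $A$ degree $8$) and $M_{B}$ is the $8$-regular multigraph on $B\cong Z_{n-2}$ furnished by $P(n-2)$. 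Then $M$ is $8$-regular with multiplicities $\le 8$, and $8K_{n}-M=\emptyset\oplus(8K_{n-2}-M_{B})\oplus 8K_{2,n-2}$; here $8K_{n-2}-M_{B}$ decomposes by $P(n-2)$, while $8K_{2,n-2}$ decomposes by Theorem~\ref{thm2.5} (with $\lambda=8$ even, $k=4$, $\min\{2,n-2\}=2\ge k/2$, and $4\mid 8\cdot 2$). This proves $P(n)$, and together with the reduction above it proves the lemma.

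The step I expect to be the main obstacle is locating the right $M$, i.e.\ proving $P(n)$ uniformly in $n$. The difficulty is that the most natural attempts fail: removing four copies of a $2$-factor $F$ leaves $4K_{n}\oplus 4(K_{n}-F)$, and neither $4(K_{n}-F)$, nor the single-copy blocks $K_{n}-F$ of Theorem~\ref{thm2.3}, nor odd multiples $\lambda K_{n}$, are $C_{4}$-decomposable except for special residues of $n$ modulo $8$, which would force an awkward case analysis. The idea that sidesteps this is to dump an \emph{entire} copy of $8K_{2}$ into $M$ on a size-$2$ part $A$ (so that $8K_{A}-M_{A}$ is empty) and to offload every edge between $A$ and the rest into the complete bipartite multigraph $8K_{2,n-2}$, which is always $C_{4}$-decomposable by Theorem~\ref{thm2.5}; this peels off one pair at a time, reduces $P(n)$ to $P(n-2)$, and grounds at the single block $6K_{5}$. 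The remaining care is routine bookkeeping: one should verify that the prism cycles, the two residual level graphs $8K_{n}-M$, and the bipartite blocks are pairwise edge-disjoint and together exhaust $E\bigl(8L(K_{2,n})\bigr)$, which is confirmed by the edge count $|E(8L(K_{2,n}))|=8n^{2}$.
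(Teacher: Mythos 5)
Your proposal is correct, and it takes a genuinely different route from the paper's for $n\ge 7$, while its base case coincides with the paper's in disguise. The paper also uses prism cycles on $\{0,1\}\times Z_5$ for $n=5$: its cycles $C_1^i=C_3^i$ and $C_2^i=C_4^i$ are exactly your prism cycles over the multigraph $M=2K_5$ (two copies each of the two $5$-cycles of differences $1$ and $2$), leaving $6K_5$ on each level, handled by Theorem~\ref{thm2.2} just as you do. But for $n\ge 7$ the paper does not induct in steps of two; it performs a single split $8L(K_{2,n})=8L(K_{2,5})\oplus 8L(K_{2,n-5})\oplus 8K_{5,n-5}\oplus 8K_{5,n-5}$, observes that $n-5$ is even so that $8L(K_{2,n-5})\cong 8(K_2\Box K_{n-5})$ is covered by the Hoffman--Pike result (Theorem~\ref{thm2.1}, both parts even), and disposes of the two bipartite blocks by Theorem~\ref{thm2.5}. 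Your argument instead isolates the clean reduction $P(n)$ (find an $8$-regular $M\subseteq 8K_n$ with $8K_n-M$ decomposable) and recurses by peeling two columns at a time, with the key trick of dumping the full $8K_A$ on a size-$2$ part into $M$ so the intra-$A$ leftover vanishes and the cross edges land in the always-decomposable $8K_{2,n-2}$. What each buys: the paper's proof is shorter because Theorem~\ref{thm2.1} absorbs the entire even-by-even remainder in one stroke; yours is more self-contained, avoiding Theorem~\ref{thm2.1} altogether and relying only on Theorems~\ref{thm2.2} and~\ref{thm2.5}, at the cost of carrying an induction and an auxiliary statement. All your numerical verifications check out: $M=2K_5$ is $8$-regular with $8K_5-M=6K_5$ and $4\mid 6\binom{5}{2}$; in the step, $M_A=8K_A$ respects the multiplicity bound, $\min\{2,n-2\}=2\ge 2$ and $4\mid 2\cdot 8$ validate Theorem~\ref{thm2.5} for $8K_{2,n-2}$; and the edge-disjointness of the prism cycles follows since each vertex $j$ has exactly $d_M(j)=8$ matching edges to distribute, matching the count $|E(8L(K_{2,n}))|=8n^2$.
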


\begin{proof}
Let $V(8L(K_{2,5}))=Z_{2}\times Z_{5}$. For $0\leq i\leq 4$, let $C_{1}^{i}=C_{3}^{i}=((0,i),(0,1+i),(1,1+i),(1,i))$ and $C_{2}^{i}=C_{4}^{i}=((0,i),(0,2+i),(1,2+i),(1,i))$ be edge disjoint $4$-cycles of $8L(K_{2,5})$. Let $C=\cup _{j=1}^{4}(\cup _{i=0}^{4}C_{j}^{i})$. Clearly, $8L(K_{2,5})-E(C)=H_{1}\oplus H_{2}$, where $V(H_{1})=\{0\}\times Z_{5}$, $V(H_{2})=\{1\}\times Z_{5}$ and $H_{1}\cong H_{2}\cong 6K_{5}$. Note that $6K_{5}$ admits a $C_{4}$-decomposition by Theorem \ref{thm2.2}. Thus, $8L(K_{2,5})$ admits a $C_{4}$-decomposition. If $n\geq7$ and $n\equiv 1~(mod~2)$, then
$8L(K_{2,n})=8L(K_{2,5})\oplus 8L(K_{2,n-5})\oplus 8K_{5,n-5}\oplus 8K_{5,n-5}$. Since $8L(K_{2,n-5})\cong 8(K_{2}\Box K_{n-5})$, by Theorems \ref{thm2.1} and \ref{thm2.5}, $8L(K_{2,n})$ has a $C_{4}$-decomposition.
\end{proof}

\begin{lem}\label{lem3.5}
The graphs $4L(K_{3,6})$ and $4L(K_{3,9})$ have $C_{4}$-decompositions.
\end{lem}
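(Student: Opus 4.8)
The plan is to imitate the clique-splitting decomposition used in the proof of Lemma~\ref{lem3.2}, choosing the split of the part $Y$ so that every resulting summand is a graph already known to admit a $C_4$-decomposition. Recall $L(K_{3,n})\cong K_3\Box K_n$. Writing $K_{3,n}=(X,Y)$ with $X=\{1,2,3\}$ and decomposing $Y=Y_1\cup Y_2$ with $|Y_k|=n_k$, I would split each $X$-vertex clique $K_n$ of the line graph as $K_{n_1}\oplus K_{n_2}\oplus K_{n_1,n_2}$. Collecting the $K_{n_1}$- and $K_{n_2}$-parts together with the corresponding $Y$-vertex cliques reassembles copies of $L(K_{3,n_1})$ and $L(K_{3,n_2})$, while the three $K_{n_1,n_2}$-parts (one for each vertex of $X$) are the leftover bipartite pieces. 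Multiplying through by $\lambda=4$ gives the identity
$$4L(K_{3,n})=4L(K_{3,n_1})\oplus 4L(K_{3,n_2})\oplus\big(\oplus_{i=1}^3 4K_{n_1,n_2}\big),$$
exactly in the style of Lemma~\ref{lem3.2}.

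For $4L(K_{3,6})$ I would take $n_1=n_2=3$, so that
$$4L(K_{3,6})=4L(K_{3,3})\oplus 4L(K_{3,3})\oplus\big(\oplus_{i=1}^3 4K_{3,3}\big).$$
Since $2L(K_{3,3})$ has a $C_4$-decomposition by Lemma~\ref{lem3.1} (with $n=3$), and $4L(K_{3,3})$ is the edge-disjoint union of two copies of $2L(K_{3,3})$, each $4L(K_{3,3})$ summand decomposes. For the bipartite summands I would apply Theorem~\ref{thm2.5} with $\lambda=4$, $k=4$, $m=n=3$: the hypotheses $\lambda m\equiv\lambda n\equiv k\equiv 0\pmod 2$ and $\min\{m,n\}\ge k/2\ge 2$ hold, and condition~$(3)$ applies because $4\mid\lambda n=12$. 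Hence $4K_{3,3}$ admits a $C_4$-decomposition and $4L(K_{3,6})$ is assembled.

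For $4L(K_{3,9})$ I would instead take $n_1=3$, $n_2=6$, giving
$$4L(K_{3,9})=4L(K_{3,3})\oplus 4L(K_{3,6})\oplus\big(\oplus_{i=1}^3 4K_{3,6}\big).$$
The first summand decomposes as above, the second by the case just established, and each $4K_{3,6}$ decomposes by Theorem~\ref{thm2.5} (again $\lambda=4$, $k=4$, with $4\mid\lambda m=12$ yielding condition~$(3)$). This completes both assemblies.

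The computations here are light, so I do not expect a genuine obstacle; the essential checks are rather the verification that the summands in the displayed identities are pairwise edge-disjoint and exhaust $4L(K_{3,n})$, i.e.\ that the residual piece $H_i$ inside each $X$-vertex clique is genuinely the complete bipartite graph $4K_{n_1,n_2}$, together with the parity and divisibility bookkeeping of Theorem~\ref{thm2.5}. The only mild subtlety is that Lemma~\ref{lem3.1} supplies $2L(K_{3,3})$ rather than $4L(K_{3,3})$, so I rely on the observation that doubling a $C_4$-decomposable multigraph preserves $C_4$-decomposability.
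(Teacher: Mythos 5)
Your proposal is correct and matches the paper's proof essentially step for step: the paper likewise splits $Y$ into two parts of sizes $3,3$ to get $4L(K_{3,6})=4L(K_{3,3})\oplus 4L(K_{3,3})\oplus(\oplus_{i=1}^3 4K_{3,3})$ and then writes $4L(K_{3,9})=4L(K_{3,3})\oplus 4L(K_{3,6})\oplus(\oplus_{i=1}^3 4K_{3,6})$, invoking Lemma~\ref{lem3.1} (via the doubling observation made in the introduction) and Theorem~\ref{thm2.5} exactly as you do. Your verifications of the hypotheses of Theorem~\ref{thm2.5} and of the edge-disjointness of the summands are sound, so nothing further is needed.
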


\begin{proof}
Suppose $X=\{1,2,3\}$, $Y_1=\{4,5,6\}$ and $Y_2=\{7,8,9\}$. Let $K_{3,6}=(X, Y_1\cup Y_2)$, $A_i=\{\{i,j\}\in E(K_{3,6})\mid j\in Y_1\cup Y_2\}$ for $1\leq i\leq3$, and $G_k=4L(K_{3,6}[X\cup Y_k])$ for $1\leq k\leq2$.
The induced subgraph of $(4L(K_{3,6})-E(G_1\cup G_2))[A_i]$ is denoted by $H_i$ for $1\leq i\leq3$. Then $4L(K_{3,6})=(\oplus_{k=1}^2 G_k)\oplus(\oplus_{i=1}^3 H_i)$. Note that $G_1\cong G_2\cong4L(K_{3,3})$ and
$H_{i}\cong 4K_{3,3}$ for $1\leq i\leq3$. Thus, by Lemma \ref{lem3.1} and Theorem \ref{thm2.5}, $4L(K_{3,6})$ has a $C_{4}$-decomposition.

Note that $4L(K_{3,9})=4L(K_{3,3})\oplus 4L(K_{3,6})\oplus 4K_{3,6}\oplus 4K_{3,6}\oplus 4K_{3,6}$. Thus, $4L(K_{3,9})$ has a $C_{4}$-decomposition by Lemma \ref{lem3.1} and Theorem \ref{thm2.5}.
\end{proof}

\section{Proof of Theorem~\ref{thm1.1}}
We prove our main result after some useful lemmas.
\begin{lem}\label{lem4.1}
For positive integers $a$, $m$, $n$ and $k$ with $n\geq 4$, if $2L(K_{m,4k})$ has a $C_{4}$-decomposition, then $2L(K_{m+4a,4k})$ has a $C_{4}$-decomposition; if $4L(K_{m,n})$ has a $C_{4}$-decomposition, then $4L(K_{m+4a,n})$ has a $C_{4}$-decomposition.
\end{lem}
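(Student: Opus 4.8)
The plan is to reduce both implications to a single structural identity for the rook's graph $K_{m+4a}\Box K_n\cong L(K_{m+4a,n})$. I partition the vertex set of the $K_{m+4a}$ factor into the set $A$ of the original $m$ vertices and the set $B$ of the $4a$ new ones. Splitting the column-clique $K_{m+4a}$ sitting in each column as $K_m\oplus K_{4a}\oplus K_{m,4a}$, and assigning the horizontal $K_n$-cliques to rows in $A$ or $B$ accordingly, I obtain
$$K_{m+4a}\Box K_n=(K_m\Box K_n)\oplus(K_{4a}\Box K_n)\oplus\Big(\oplus_{j=1}^{n}K_{m,4a}\Big),$$
and hence, after replacing each edge by $\lambda$ parallel edges,
$$\lambda L(K_{m+4a,n})=\lambda L(K_{m,n})\oplus\lambda L(K_{4a,n})\oplus\Big(\oplus_{j=1}^{n}\lambda K_{m,4a}\Big).$$
It then suffices to produce a $C_4$-decomposition of each summand separately and take their union.

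For the first implication I set $\lambda=2$ and $n=4k$. The summand $2L(K_{m,4k})$ is $C_4$-decomposable by hypothesis. Since $4a$ and $4k$ are both even, Theorem~\ref{thm2.1} gives a $C_4$-decomposition of $K_{4a}\Box K_{4k}$, which doubles to one of $2L(K_{4a,4k})$ (recall that a $C_4$-decomposition of $G$ induces one of $\lambda G$). Each cross copy $2K_{m,4a}$ is then handled by Theorem~\ref{thm2.5} with cycle length $4$: since $\lambda=2$ is even and $4$ divides $2\cdot 4a$, the relevant divisibility condition holds. For the second implication I set $\lambda=4$ and $n\geq 4$. The summand $4L(K_{m,n})$ is decomposable by hypothesis, each cross copy $4K_{m,4a}$ is decomposable by Theorem~\ref{thm2.5} once more, and $4L(K_{4a,n})$ is decomposed by splitting the rook's graph $K_{4a}\Box K_n$ into its $n$ column-cliques $K_{4a}$ and its $4a$ row-cliques $K_n$, each of which scales to $4K_{4a}$ or $4K_n$ and is $C_4$-decomposable by Theorem~\ref{thm2.2}; here the hypothesis $n\geq 4$ is exactly the inequality $4\leq n$ that Theorem~\ref{thm2.2} requires for $4K_n$.

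The bulk of the remaining work is the routine verification of the parity and divisibility hypotheses of Theorems~\ref{thm2.2} and~\ref{thm2.5} for each block. The delicate point is the cross term $\lambda K_{m,4a}$: Theorem~\ref{thm2.5} forces $\min\{m,4a\}\geq 2$, so the argument as written presumes $m\geq 2$. The case $m=1$ must be treated separately, but there $L(K_{1+4a,n})=K_{1+4a}\Box K_n$ can be decomposed directly through its column-cliques $\lambda K_{1+4a}$ and row-cliques $\lambda K_n$ via Theorem~\ref{thm2.2}, without invoking the hypothesis at all. I expect the genuine obstacle to be guaranteeing that $\lambda L(K_{4a,n})$ decomposes for every admissible $n$ — in particular for \emph{odd} $n$ in the $\lambda=4$ case, where Theorem~\ref{thm2.1} does not apply; this is precisely why I route that summand through the column/row clique splitting together with Theorem~\ref{thm2.2} rather than through Theorem~\ref{thm2.1}.
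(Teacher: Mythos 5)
Your proposal is correct and follows essentially the same route as the paper: you peel off $\lambda L(K_{m,n})$, the block $K_{4a}\Box K_{n}$ on the $4a$ new vertices, and the $n$ per-column cross bipartite graphs $\lambda K_{m,4a}$, dispose of the pieces via Theorems~\ref{thm2.2} and~\ref{thm2.5}, and rightly treat $m=1$ separately (where $\min\{m,4a\}\geq 2$ fails for Theorem~\ref{thm2.5}) by row and column cliques alone, exactly as the paper's Case~1 does. The only cosmetic difference is that for $\lambda=2$ you decompose $2(K_{4a}\Box K_{4k})$ in one stroke via Theorem~\ref{thm2.1}, whereas the paper splits it into $4a$ copies of $2K_{4k}$ and $4k$ copies of $2K_{4a}$ and applies Theorem~\ref{thm2.2}; both verifications go through.
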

\begin{proof}
Suppose $X_1=\{s(i)\mid 1\leq i\leq m\}$, $X_2=\{v(i)\mid 1\leq i\leq 4a\}$ and $Y=\{t(i)\mid 1\leq i\leq 4k\}$.
Let $K_{m+4a,4k}=(X_1\cup X_2, Y)$, $A_j=\{\{t(j),s(h)\}\in E(K_{m+4a,4k})\mid 1\leq h\leq m\}$
and $B_j=\{\{t(j),v(i)\}\in E(K_{m+4a,4k})\mid 1\leq i\leq 4a\}$ for $1\leq j\leq4k$.

{\bf Case 1.}\ $m=1$.

Let $G_0=2K_{1+4a,4k}(s(1))$, $G_{1,i}=2K_{1+4a,4k}(v(i))$ for $1\leq i\leq4a$, and $G_{2,j}=2K_{1+4a,4k}(t(j))$ for $1\leq j\leq4k$.
Then $2L(K_{1+4a,4k})=G_0\oplus(\oplus_{i=1}^{4a} G_{1,i})\oplus(\oplus_{j=1}^{4k} G_{2,j})$. Note that $G_0\cong 2L(K_{1,4k})$ and $G_{1,i}\cong 2K_{4k}$ for $1\leq i\leq4a$ and
$G_{2,j}\cong 2K_{4a+1}$ for $1\leq j\leq 4k$. Thus, by Theorem \ref{thm2.2}, $2L(K_{1+4a,4k})$ has a $C_{4}$-decomposition.

{\bf Case 2.}\ $m\geq2$.

Let $G_0=2L(K_{m+4a,4k}[X_1\cup Y])$, $G_{1,i}=2K_{m+4a,4k}(v(i))$ for $1\leq i\leq4a$, $G_{2,j}=2K_{m+4a,4k}[X_2\cup Y](t(j))$ and $G_{3,j}=(2L(K_{m+4a,4k})-E(G_0\cup(\cup_{i=1}^{4a}G_{1,i})\cup(\cup_{i=1}^{4k}G_{2,i})))[A_j\cup B_j]$ for $1\leq j\leq4k$.
Then $2L(K_{m+4a,4k})=G_0\oplus(\oplus_{i=1}^{4a} G_{1,i})\oplus(\oplus_{j=1}^{4k} G_{2,j})\oplus(\oplus_{j=1}^{4k} G_{3,j})$. Note that $G_0\cong2L(K_{m,4k})$, $G_{1,i}\cong 2K_{4k}$ for $1\leq i\leq4a$,
$G_{2,j}\cong 2K_{4a}$ and $G_{3,j}\cong 2K_{4a,m}$ for $1\leq j\leq 4k$. Thus, by Theorems \ref{thm2.2} and \ref{thm2.5}, $2L(K_{m+4a,4k})$ has a $C_{4}$-decomposition.

Similarly, $4L(K_{1+4a,n})=4L(K_{1,n})\oplus\underbrace{4K_{n}\oplus\cdots\oplus4K_{n}}_{4a~copies}\oplus\underbrace{4K_{4a+1}\oplus\cdots\oplus4K_{4a+1}}_{n~copies}$, when $m=1$. Thus, the graph $4L(K_{1+4a,n})$ has a $C_{4}$-decomposition by Theorem 2.2. When $m\geq2$, $4L(K_{m+4a,n})=4L(K_{m,n})\oplus\underbrace{4K_{n}\oplus\cdots\oplus4K_{n}}_{4a~copies}\oplus\underbrace{4K_{4a}\oplus\cdots\oplus4K_{4a}}_{n ~copies}\oplus\underbrace{4K_{4a,m}\oplus\cdots\oplus4K_{4a,m}}_{n~copies}$, and then $4L(K_{m+4a,n})$ has a $C_{4}$-decomposition by Theorems 2.2 and 2.5.
\end{proof}

\begin{lem}\label{lem4.2}
For positive integers $\lambda$, $m$, $n$ and $a$ with $m\equiv n\equiv 1~(mod~2)$ and $mn\geq 4$, if $\lambda L(K_{m,n})$ has a $C_{4}$-decomposition, then $\lambda L(K_{m+8a,n})$ has a $C_{4}$-decomposition.
\end{lem}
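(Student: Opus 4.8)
The plan is to mimic the structure already used in Lemma \ref{lem4.1}, decomposing the enlarged line graph $\lambda L(K_{m+8a,n})$ into a copy of $\lambda L(K_{m,n})$ (which we may assume has a $C_4$-decomposition by hypothesis) together with several easily-handled pieces, each of which is either a multiple of a complete graph or a multiple of a complete bipartite graph whose $C_4$-decomposability follows from Theorems \ref{thm2.2} and \ref{thm2.5}. First I would partition the $m+8a$ vertices on one side of $K_{m+8a,n}$ into the original set $X_1$ of size $m$ and an added set $X_2$ of size $8a$, keeping $Y$ of size $n$; recall $L(K_{m+8a,n})\cong K_{m+8a}\Box K_n$, so the vertex set of the line graph is naturally indexed by (edge-side)$\times$(edge-side), and the $j$-cliques $K_{m+8a,n}(j)$ decompose the ``horizontal'' complete-graph layers.

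The key decomposition I would write out is
\[
\lambda L(K_{m+8a,n})=\lambda L(K_{m,n})\oplus\Bigl(\bigoplus \lambda K_{n}\Bigr)\oplus\Bigl(\bigoplus \lambda K_{8a}\Bigr)\oplus\Bigl(\bigoplus \lambda K_{8a,m}\Bigr),
\]
where there are $8a$ copies of $\lambda K_n$ (one $v(i)$-clique for each added vertex), $n$ copies of $\lambda K_{8a}$ (the portion of each $t(j)$-clique spanned by the added edges), and $n$ copies of $\lambda K_{8a,m}$ (the cross edges in each $t(j)$-clique joining added edges to original edges). The first summand carries a $C_4$-decomposition by assumption. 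For the $\lambda K_n$ and $\lambda K_{8a}$ terms I would invoke Theorem \ref{thm2.2}, checking its three divisibility conditions; here the hypothesis $n\equiv 1\ (mod\ 2)$ together with $8\mid 8a$ is exactly what makes $4\mid \lambda\binom{8a}{2}$ and the parity condition $\lambda(n-1)$ even hold, so this is where the specific modulus $8a$ (rather than $4a$) is needed. For the bipartite pieces $\lambda K_{8a,m}$ I would apply Theorem \ref{thm2.5} with $k=4$, using $8a\equiv 0\ (mod\ 8)$ and $m$ odd to verify the relevant parity and divisibility ($4\mid 8am$, etc.).

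The main obstacle I expect is the bookkeeping of the $m=1$ versus $m\geq 2$ cases, exactly as in Lemma \ref{lem4.1}: when $m=1$ there are no cross edges inside the original $j$-cliques, so the $\lambda K_{8a,m}$ summand degenerates and the term $\lambda L(K_{1,n})$ must be recognized as a complete graph $\lambda K_n$ rather than an honest line graph, forcing a slightly different (but simpler) decomposition list. I would therefore present the $m=1$ case separately, writing $\lambda L(K_{1+8a,n})$ as a direct sum of one $\lambda L(K_{1,n})$ term, $8a$ copies of $\lambda K_n$, and $n$ copies of $\lambda K_{8a+1}$, and dispatch it via Theorem \ref{thm2.2}; the case $m\geq 2$ then uses the full four-term decomposition above. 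Beyond this case split the argument is entirely routine verification of the numerical hypotheses of Theorems \ref{thm2.2} and \ref{thm2.5}, so no genuinely hard step remains once the correct decomposition is written down.
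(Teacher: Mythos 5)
Your decomposition identity is correct as a graph identity, but the claim that every summand admits a $C_4$-decomposition fails in exactly the cases for which this lemma exists. The $8a$ new row cliques are copies of $\lambda K_n$, and Theorem~\ref{thm2.2} requires $4 \mid \lambda\binom{n}{2}$; since $n$ is odd, $\binom{n}{2}$ is odd whenever $n\equiv 3 \pmod 4$, so $\lambda K_n$ has no $C_4$-decomposition unless $4\mid\lambda$. But the main theorem invokes Lemma~\ref{lem4.2} with $\lambda\equiv 2\pmod 4$ and $n\equiv 3,7\pmod 8$ (Cases 2.1(3) and 2.3, growing $2L(K_{3,3})$, $2L(K_{7,7})$ and $2L(K_{3,7})$), and even with $n=3$, where $K_3$ cannot contain a $4$-cycle at all: concretely, for $\lambda=2$, $m=n=3$, $a=1$, your list contains the summand $2K_3$, which has no $C_4$-decomposition, yet the lemma asserts $2L(K_{11,3})$ decomposes. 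Your stated verification checks only the divisibility condition $4\mid\lambda\binom{8a}{2}$ and silently assumes the $\lambda K_n$ pieces are fine, which is false. Moreover, the lemma allows odd $\lambda$ (e.g.\ $m,n\equiv 1\pmod 8$), and then two further summands break: $\lambda K_{8a}$ violates condition (2) of Theorem~\ref{thm2.2} ($\lambda(8a-1)$ must be even), and $\lambda K_{8a,m}$ with $m$ odd violates the hypothesis $\lambda m\equiv 0\pmod 2$ of Theorem~\ref{thm2.5}. (Your $m=1$ case does happen to work, because there the leftover row cliques $\lambda K_n\cong\lambda L(K_{1,n})$ decompose by the hypothesis itself; but for $m\geq 2$ nothing supplies the $\lambda K_n$ pieces.)

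This is precisely why the paper's proof of Lemma~\ref{lem4.2} is not a repeat of Lemma~\ref{lem4.1}, and why the increment is $8a$ rather than $4a$. The paper deletes a short cycle $C_l(i)$ (with $l\in\{3,5,6\}$ according to $n\bmod 8$) from each new row clique $K_{m+8a,n}(v(i))\cong K_n$ so that the $\lambda$-fold remainder decomposes by the Fu--Rodger result (Theorem~\ref{thm2.3}); it then pairs the deleted cycles across new vertices matched by the four edge-disjoint $1$-factors $F(1),\dots,F(4)$ of $K_{8a}$ guaranteed by Theorem~\ref{thm2.4}, via an edge-coloring, to form mixed $4$-cycles. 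The vertical edges consumed by these mixed cycles form a $2$-factor $R(t(j))$ in each affected column clique; the column cliques, augmented by the borrowed vertex $\{s(m),t(j)\}$ so that they have odd order $8a+1$, minus these $2$-factors decompose again by Theorem~\ref{thm2.3} (by Theorem~\ref{thm2.2} when untouched, and by Theorem~\ref{thm2.4} itself for the $4$-factor arising when $n=5$). Excluding $s(m)$ also makes the cross pieces $\lambda K_{m-1,8a}$ have both sides even, so Theorem~\ref{thm2.5} applies for every $\lambda$. Repairing your proof would require essentially all of this machinery; routine layer-peeling suffices only under the stronger divisibility hypotheses built into Lemma~\ref{lem4.1}.
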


\begin{proof}
Suppose $X_1=\{s(i)\mid 1\leq i\leq m\}$, $X_2=\{v(i)\mid 1\leq i\leq 8a\}$ and $Y=\{t(i)\mid 1\leq i\leq n\}$. Let $K_{m+8a,n}=(X_1\cup X_2, Y)$. We suppose that $K_{8a}$ is the complete graph on the vertex set $X_2$.
By Theorem \ref{thm2.4}, there exists a set $F=\{F(i)|1\leq i\leq 4\}$ of four edge-disjoint $1$-factors in $K_{8a}$ such that $(K_{8a}-E(\cup_{i=1}^4F(i)))\vee K_{1}$  has a $C_{4}$-decomposition.

{\bf Case 1.}\ $n\neq 1$ and $n\neq 5$.

We consider the following edge-disjoint subgraphs of $\lambda L(K_{m+8a,n})$.

(1) Suppose $G_1=\lambda L(K_{m+8a,n}[X_1\cup Y])$. Since $G_1\cong \lambda L(K_{m,n})$, $G_1$ has a $C_{4}$-decomposition.

(2) For each $1\leq i\leq 8a$, let $C_{l}(i)$ be the cycle $(\{v(i),t(1)\},\{v(i),t(2)\},\ldots,\{v(i),t(l)\})$ of $K_{m+8a,n}(v(i))$ and let $G_{2,i}=\lambda(K_{m+8a,n}(v(i))-E(C_{l}(i)))$. If $l=0$, we set $E(C_{l}(i))=\emptyset$. If $n\equiv 1~(mod~8)$ and $l=0$, then $G_{2,i}\cong\lambda K_{n}$ has a $C_{4}$-decomposition by Theorem \ref{thm2.2}. If $n\equiv 3,5$ or $7~(mod~8)$ and $l=3, 6$ or $5$ respectively, and $n\neq3$,
then $G_{2,i}$ has a $C_{4}$-decomposition by Theorem \ref{thm2.3}. If $n=3$, then let $\oplus_{i=1}^{8a}G_{2,i}=\emptyset$.

(3) For $l>0$ and each $1\leq i\leq 8a$, we define an edge-coloring of $\lambda C_{l}(i)$ with at most three colors as follows. The edges in $\lambda C_{l}(i)$ joining each pair of adjacent vertices are assigned by the same color. Next, we only need to give the edge-coloring of $C_{l}(i)$. If $l=6$, the edges $\{\{v(i),t(1+j)\},\{v(i),t(2+j)\}\}$ for $j\in\{0,2,4\}$ are assigned by color $x_1$ and all the other edges are assigned by color $x_2$. If $l\in\{3,5\}$, then the edge $\{\{v(i),t(1)\},\{v(i),t(l)\}\}$ is assigned by color $x_3$, and the remaining edges of $C_{l}(i)$ are alternately assigned by $x_1$ and $x_2$, where the edge $\{\{v(i),t(1)\},\{v(i),t(2)\}\}$ is assigned by color $x_1$.

Assume that $e_{i,j}=\{\{v(i),t(j)\},\{v(i),t(j+1)\}\}\in E(C_{l}(i))$ with color $x_k$ and the vertices $v(i)$ and $v(i_{k})$ are adjacent in $F(k)$. Note that the edges $e_{i,j}\in E(C_{l}(i))$ and $e_{i_{k},j}\in E(C_{l}(i_{k}))$ have the same color $x_k$ and form a cycle $H_{i,j}=(\{v(i),t(j)\},\{v(i),t(j+1)\},\{v(i_{k}),t(j+1)\},\{v(i_{k}),t(j)\})$ for some $1\leq j\leq l-1$ or $H_{i,l}=(\{v(i),t(l)\},\{v(i),t(1)\},\{v(i_{k}),t(1)\},\{v(i_{k}),t(l)\})$ for $j=l$. Let $G_3=\cup_{i=1}^{8a}(\cup_{j=1}^{l}H_{i,j})$ and $H^{\star}=\cup_{i=1}^{8a}C_{l}(i)$. Thus, $H^{\star}\subseteq G_3$ and $\lambda G_3$ has a $C_{4}$-decomposition. If $l=0$, then let $\lambda G_3=\emptyset$.

(4) Note that $G_3-E(H^{\star})$ is a spanning subgraph of $\cup_{j=1}^lK_{m+8a,n}[X_2\cup Y](t(j))$. In fact, for any vertex $\{v(i),t(j)\}$ of $G_3$, $\{v(i),t(j)\}$ lies exactly in two cycles from $\{H_{i,j}\mid 1\leq j\leq l\}$. Then $d_{G_3-E(H^{\star})}(\{v(i),t(j)\})=2$. So $G_3-E(H^{\star})$ is a $2$-factor of $\cup_{j=1}^lK_{m+8a,n}[X_2\cup Y](t(j))$. For each $1\leq j\leq l$, let $R(t(j))=(G_3-E(H^{\star}))[V(K_{m+8a,n}[X_2\cup Y](t(j)))]$. Therefore, $R(t(j))$ is a $2$-factor of $K_{m+8a,n}[X_2\cup Y](t(j))$. Then, $\lambda G_3=\lambda H^{\star}\oplus\lambda (G_3-E(H^{\star}))=\lambda H^{\star}\oplus(\cup_{j=1}^{l}\lambda R(t(j)))$.
For $1\leq j\leq l$, let $G_{4,j}=\lambda(K_{m+8a,n}[\{s(m)\}\cup X_2\cup Y](t(j))-E(R(t(j))))$. By Theorem \ref{thm2.3}, $G_{4,j}$ has a $C_{4}$-decomposition for $1\leq j\leq l$. If $l=0$, then let $\oplus_{j=1}^{l}G_{4,j}=\emptyset$.

(5) For $l+1\leq j\leq n$, let $G_{5,j}=\lambda(K_{m+8a,n}[\{s(m)\}\cup X_2\cup Y](t(j)))$. Since $G_{5,j}\cong\lambda K_{8a+1}$, $G_{5,j}$ has a $C_{4}$-decomposition by Theorem \ref{thm2.2} for $l+1\leq j\leq n$.
If $n=3$, then let $\oplus_{j=l+1}^{n}G_{5,j}=\emptyset$.

(6) For $1\leq j\leq n$, suppose $A_j=\{\{t(j),s(i)\}|1\leq i\leq m-1\}\}$ and $B_j=\{\{t(j),v(i)\}|1\leq i\leq 8a\}$. Let $G_{6,j}=\lambda (K_{m+8a,n}(t(j))-\{t(j),s(m)\}-E(L(K_{m+8a,n})[A_j])-E(L(K_{m+8a,n})[B_j]))$.  Thus, $G_{6,j}\cong\lambda K_{m-1,8a}$ has a $C_{4}$-decomposition by Theorem \ref{thm2.5}. If $m=1$, let $\oplus_{j=1}^{n}G_{6,j}=\emptyset$.

Note that $\lambda L(K_{m+8a,n})=G_1\oplus(\oplus_{i=1}^{8a}G_{2,i})\oplus\lambda G_3\oplus(\oplus_{j=1}^{l}G_{4,j})\oplus(\oplus_{j=l+1}^{n}G_{5,j})\oplus(\oplus_{j=1}^{n}G_{6,j})$. Thus $\lambda L(K_{m+8a,n})$ has a $C_{4}$-decomposition.

{\bf Case 2.}\ $n=5$.

We consider the following edge-disjoint subgraphs of $\lambda L(K_{m+8a,5})$.

(1) Suppose $G_1=\lambda L(K_{m+8a,5}[X_1\cup Y])$. Since $G_1\cong \lambda L(K_{m,5})$, $G_1$ has a $C_{4}$-decomposition.

(2) For $1\leq i\leq 8a$, let $C(i)=(\{v(i),t(5)\},\{v(i),t(4)\},\{v(i),t(1)\})\cup (\{v(i),t(5)\},\{v(i),t(3)\},\\\{v(i),t(2)\})$ and $G_{2,i}=K_{m+8a,5}(v(i))-E(C(i))$. Then, the graph $G_{2,i}$ is the union of a $4$-cycle $(\{v(i),t(1)\},\{v(i),t(2)\},\{v(i),t(4)\},\{v(i),t(3)\})$ and a vertex $\{v(i),t(5)\}$. Thus, $\lambda G_{2,i}$ has a $C_{4}$-decomposition.

(3) For each $1\leq i\leq 8a$, we define an edge-coloring of $\lambda C(i)$ with four colors as follows. The edges in $\lambda C(i)$ joining each pair of adjacent vertices are assigned by the same color. Next, we only need to give the edge-coloring of $C(i)$. We color the edges $\{\{v(i),t(2)\},\{v(i),t(3)\}\}$ and $\{\{v(i),t(4)\},\{v(i),t(5)\}\}$ with $1$, $\{\{v(i),t(1)\},\{v(i),t(4)\}\}$ and $\{\{v(i),t(3)\},\{v(i),t(5)\}\}$ with $2$, $\{\{v(i),t(2)\},\{v(i),t(5)\}\}$ with $3$, $\{\{v(i),t(1)\},\{v(i),t(5)\}\}$ with $4$. Let $H_i$ be the set of cycles
$(\{v(i),t(2)\},\{v(i),t(3)\},\{v(i_{1}),t(3)\},\{v(i_{1}),t(2)\})$,
$(\{v(i),t(4)\},\{v(i),t(5)\},\{v(i_{1}),t(5)\},\\\{v(i_{1}),t(4)\})$,
$(\{v(i),t(1)\},\{v(i),t(4)\},\{v(i_{2}),t(4)\},\{v(i_{2}),t(1)\})$,
$(\{v(i),t(3)\},\{v(i),t(5)\},\\\{v(i_{2}),t(5)\},\{v(i_{2}),t(3)\})$,
$(\{v(i),t(2)\},\{v(i),t(5)\},\{v(i_{3}),t(5)\},\{v(i_{3}),t(2)\})$,
$(\{v(i),t(1)\},\\\{v(i),t(5)\},\{v(i_{4}),t(5)\},\{v(i_{4}),t(1)\})$,
where the vertex $v(i_{k})$ and the vertex $v(i)$ are adjacent in $F(k)$. Let $G_3=\cup_{i=1}^{8a}(\cup_{C\in H_i}C)$ and $H^{\star}=\cup_{i=1}^{8a}C(i)$. Then, $\lambda G_3$ has a $C_{4}$-decomposition.

(4) Similarly, $G_3$ exactly contains all edges of $H^{\star}$, a $2$-factor of $\cup_{j=1}^{4}K_{m+8a,5}[X_2\cup Y](t(j))$ and a $4$-factor of $K_{m+8a,5}[X_2\cup Y](t(5))$. For $1\leq j\leq 5$, let $R(t(j))=(G_3-E(H^{\star}))[V(K_{m+8a,5}[X_2\cup Y](t(j)))]$. Therefore, $R(t(j))$ is a $2$-factor of $K_{m+8a,5}[X_2\cup Y](t(j))$ for $1\leq j\leq 4$ and $R(t(5))$ is a $4$-factor of $K_{m+8a,5}[X_2\cup Y](t(5))$.
Note that $R(t(5))$ is isomorphic to $\cup_{k=1}^4 F(k)$.  Then, $\lambda G_3=\lambda H^{\star}\oplus\lambda (G_3-E(H^{\star}))=\lambda H^{\star}\oplus(\cup_{j=1}^{5}\lambda R(t(j)))$.
For $1\leq j\leq 4$, let $G_{4,j}=\lambda(K_{m+8a,5}[\{s(m)\}\cup X_2\cup Y](t(j))-E(R(t(j))))$. By Theorem \ref{thm2.3}, $G_{4,j}$ has a $C_{4}$-decomposition for $1\leq j\leq 4$.

(5) Let $G_{5}=K_{m+8a,5}[\{s(m)\}\cup X_2\cup Y](t(5))-E(R(t(5)))$. Since $G_{5}\cong (K_{8a}-E(\cup_{k=1}^4 F(k)))\vee K_{1}$, $\lambda G_{5}$ has a $C_{4}$-decomposition by Theorem \ref{thm2.4}.

(6) For $1\leq j\leq 5$, we can define the graph $G_{6,j}$ as Case 1 (6) satisfying that $G_{6,j}$ has a $C_{4}$-decomposition.

Note that $\lambda L(K_{m+8a,5})=G_1\oplus(\oplus_{i=1}^{8a}\lambda G_{2,i})\oplus\lambda G_3\oplus(\oplus_{j=1}^{4}G_{4,j})\oplus(\oplus\lambda G_{5})\oplus(\oplus_{j=1}^{5}G_{6,j})$. Thus $\lambda L(K_{m+8a,5})$ has a $C_{4}$-decomposition.

{\bf Case 3.}\ $n=1$.

Since $mn\geq4$, $m\geq4$. We consider the following edge-disjoint subgraphs of $\lambda L(K_{m+8a,1})$.

Suppose $G_1=\lambda L(K_{m+8a,1}[X_1\cup Y])$. Since $G_1\cong \lambda L(K_{m,1})$, $G_1$ has a $C_{4}$-decomposition.
Let $G_{2}=\lambda(K_{m+8a,1}[\{s(m)\}\cup X_2\cup Y](t(1))$. Since $G_{2}\cong\lambda K_{8a+1}$, $G_{2}$ has a $C_{4}$-decomposition by Theorem \ref{thm2.2}.
Suppose $A_1=\{\{t(1),s(i)\}|1\leq i\leq m-1\}\}$ and $B_1=\{\{t(1),v(i)\}|1\leq i\leq 8a\}$. Let $G_{3}=\lambda (K_{m+8a,1}(t(1))-\{t(1),s(m)\}-E(L(K_{m+8a,1})[A_1])-E(L(K_{m+8a,1})[B_1]))$. Since $G_{3}\cong\lambda K_{m-1,8a}$,  $G_{3}$ has a $C_{4}$-decomposition by Theorem \ref{thm2.5}.

Therefore, this lemma holds.
\end{proof}

Note that $\lambda L(K_{m,n})=\underbrace{\lambda K_{m}\oplus\cdots\oplus\lambda K_{m}}_{n ~copies}\oplus\underbrace{\lambda K_{n}\oplus\cdots\oplus\lambda K_{n}}_{m~copies}$.  A $4$-cycle in $\lambda L(K_{m,n})$ is $pure$ if all edges of the $4$-cycle are contained within one copy of $\lambda K_{m}$ or
one copy of $\lambda K_{n}$. A $4$-cycle in $\lambda L(K_{m,n})$ that is not pure is $mixed$. If all vertices of  $\lambda L(K_{m,n})$ are arranged in a rectangular grid with $m$ rows and $n$ columns, then a pure $4$-cycle contains four vertical edges or four horizontal edges and a mixed $4$-cycle contains two vertical edges and two horizontal edges. We are finally ready to prove Theorem 1.1.

\begin{proof}
 The obvious necessary conditions for the existence of a $C_{4}$-decomposition of $\lambda L(K_{m,n})$ are that each vertex in the graph has even degree and  $4$ divides the number of edges in the graph. We observe that $\lambda L(K_{m,n})$ has $mn$ vertices and the degree of each vertex is  $\lambda (m+n-2)$. Hence, $\lambda L(K_{m,n})$ has $\frac{\lambda mn(m+n-2)}{2}$ edges. Thus, we have  $2\mid \lambda (m+n-2)$ and $8\mid \lambda mn(m+n-2)$.

Assume that $m\equiv 3~(mod~8)$, $n\equiv 7~(mod~8)$ and $\lambda \equiv 1~(mod~2)$. Note that each pure $4$-cycle in $\lambda L(K_{m,n})$ contains even horizontal edges and each mixed $4$-cycle contains two horizontal edges. Hence, the number of horizontal edges used by all $4$-cycles is even. On the other hand, the total number of horizontal edges in $\lambda L(K_{m,n})$ is $\frac{\lambda mn(n-1)}{2}$. Note that $m$, $n$, $\lambda$ are odd, and  $\frac{n-1}{2}\equiv 3~(mod~4)$. Then the total number of horizontal edges in $\lambda L(K_{m,n})$ is odd, which is a contradiction. Thus, if $m$~$(resp. ~n)$~$\equiv 3~(mod~8)$ and $n$~$(resp. ~m)$~$\equiv 7~(mod~8)$, then $\lambda \equiv 0~(mod~2)$.

We consider the case of $m=2$, $n\equiv 1~(mod~2)$. Since $8\mid\lambda mn(m+n-2)$, we have $\lambda\equiv 0~(mod~4)$. Observe that each vertical edge is contained by a mixed $4$-cycle. Hence the total number of edges contained by all mixed $4$-cycles in each $\lambda K_{n}$ is $\frac{\lambda n}{2}$. The number of remaining edges in each $\lambda K_{n}$ is $\frac{\lambda n(n-2)}{2}$ and the remaining edges in each $\lambda K_{n}$ would be contained by pure $4$-cycles. Then $\lambda n(n-2)\equiv 0~(mod~8)$. On the other hand, if $\lambda \equiv 4~(mod~8)$, then $\lambda n(n-2)\equiv 4~(mod~8)$, which is a contradiction. If $n=3$, then $\lambda K_{3}$ has $3$ vertices and cannot contain 4-cycles, which is also a contradiction. Thus, if $m$~$(resp. ~n)$~$=2$ and $n$~$(resp. ~m)$~$\equiv 1~(mod~2)$, then $n$~$(resp.~m)\geq5$ and $\lambda \equiv 0~(mod~8)$.

Therefore, the necessity holds. We now prove sufficiency. Note that $L(K_{m,n})\cong K_{m}\Box K_{n}$. We discuss the following three cases.

{\bf Case 1.}\ $m$ and $n$ are both even.

By Theorem \ref{thm2.1}, $\lambda L(K_{m,n})$ has a $C_{4}$-decomposition.

{\bf Case 2.}\ $m$ and $n$ are both odd.

If $m$ and $n$ are both odd, then $m+n-2$ is even. Since $8\mid \lambda mn(m+n-2)$, we have $8\mid \lambda (m+n-2)$. Then, $m$, $n$, $\lambda$ satisfy  (1) $m+n-2\equiv 0~(mod~8)$, or (2)$m+n-2\equiv 2~(mod~4)$ and $\lambda\equiv 0~(mod~4)$, or (3)$m+n-2\equiv 4~(mod~8)$ and $\lambda\equiv 0~(mod~2)$.
Let $m\equiv k_{1}~(mod~8)$, $n\equiv k_{2}~(mod~8)$, where $0\leq k_{i}\leq7$ for $1\leq i\leq2$. Without loss of generality, we assume that $k_{1} \leq k_{2}$.

{\bf Case 2.1.}\  $m+n-2\equiv 0~(mod~8)$.

In this case, we have (1) $m,n\equiv 1~(mod~8)$ or (2) $m,n\equiv 5~(mod~8)$ or (3) $m\equiv 3~(mod~8)$, $n\equiv 7~(mod~8)$.
If $m$, $n$ satisfy (1) or (2), then $\lambda L(K_{m,n})$ has a $C_{4}$-decomposition by Theorem \ref{thm2.1}.
If $m\equiv 3~(mod~8)$, $n\equiv 7~(mod~8)$ with $\lambda \equiv 0~(mod~2)$, then $\lambda L(K_{m,n})$ has a $C_{4}$-decomposition by Lemmas \ref{lem3.2} and \ref{lem4.2}.

{\bf Case 2.2.}\  $m+n-2\equiv 2~(mod~4)$ and $\lambda\equiv 0~(mod~4)$.

Note that $mn\geq4$. If $m\equiv 1~(mod~8)$, $n\equiv 3~(mod~8)$ and $\lambda \equiv 0~(mod~4)$, then $4L(K_{1,11})\cong4K_{11}$ and $4L(K_{9,3})$ have a $C_{4}$-decomposition by Theorem \ref{thm2.2} and Lemma \ref{lem3.5}, respectively. Thus, by Lemma \ref{lem4.2}, $\lambda L(K_{m,n})$ has a $C_{4}$-decomposition.

If $m\equiv 1~(mod~8)$, $n\equiv 7~(mod~8)$ and $\lambda \equiv 0~(mod~4)$, then $4L(K_{1,7})\cong4K_7$ has a $C_{4}$-decomposition by Theorem \ref{thm2.2}. Thus, by Lemma \ref{lem4.2}, $\lambda L(K_{m,n})$ has a $C_{4}$-decomposition.

If $m\equiv 3~(mod~8)$, $n\equiv 5~(mod~8)$ and  $\lambda \equiv 0~(mod~4)$, then $4L(K_{3,5})$ has a $C_{4}$-decomposition by Lemma \ref{lem3.3}. Thus, by Lemma \ref{lem4.2}, $\lambda L(K_{m,n})$ has a $C_{4}$-decomposition.

If $m\equiv 5~(mod~8)$, $n\equiv 7~(mod~8)$ and  $\lambda \equiv 0~(mod~4)$, then $4L(K_{1,7})\cong4K_7$ has a $C_{4}$-decomposition by Theorem \ref{thm2.2}. Thus, by Lemmas \ref{lem4.1} and \ref{lem4.2}, $\lambda L(K_{m,n})$ has a $C_{4}$-decomposition.

{\bf Case 2.3.}\  $m+n-2\equiv 4~(mod~8)$ and $\lambda\equiv 0~(mod~2)$.

In this case, we have (1) $m\equiv 1~(mod~8)$, $n\equiv 5~(mod~8)$ or (2) $m, n\equiv 3~(mod~8)$ or (3) $m, n\equiv 7~(mod~8)$.
If $m$, $n$ satisfy (2) or (3) and $\lambda \equiv 0~(mod~2)$, then $2L(K_{3,3})$ and $2L(K_{7,7})$ have $C_{4}$-decompositions by Lemma \ref{lem3.1}. Thus, by Lemma \ref{lem4.2}, $\lambda L(K_{m,n})$ has a $C_{4}$-decomposition.
If $m\equiv 1~(mod~8)$, $n\equiv 5~(mod~8)$ and $\lambda \equiv 0~(mod~2)$, then $2L(K_{1,5})\cong 2K_5$ has a $C_{4}$-decomposition by Theorem \ref{thm2.2}. Thus, by Lemma \ref{lem4.2}, $\lambda L(K_{m,n})$ has a $C_{4}$-decomposition.

{\bf Case 3.}\ $m$ and $n$ have different parity.

Without loss of generality, we assume that $m$ is even and $n$ is odd. Since $2\mid \lambda (m+n-2)$ and $8\mid\lambda mn(m+n-2)$, we have $2\mid\lambda$ and $8\mid\lambda m$.

If $m\equiv 0~(mod~4)$, $n\equiv 1~(mod~4)$, then $2L(K_{m,1})\cong 2K_m$ has a $C_{4}$-decomposition by Theorem \ref{thm2.2}. Then $2L(K_{1+4a,m})$ has a $C_{4}$-decomposition by Lemma \ref{lem4.1}. Thus, $2L(K_{n,m})$ has a $C_{4}$-decomposition. Since $\lambda\equiv 0~(mod~2)$, we have $\lambda L(K_{m,n})$ has a $C_{4}$-decomposition.

If $m\equiv 0~(mod~4)$, $n\equiv 3~(mod~4)$, then $2L(K_{4,3})$ has a $C_{4}$-decomposition by Lemma \ref{lem3.2}. Then $2L(K_{3+4a,4})$ has a $C_{4}$-decomposition by Lemma \ref{lem4.1}. Thus, $2L(K_{n,4})$ has a $C_{4}$-decomposition. Suppose that $2L(K_{n,4(k-1)})$ has a $C_{4}$-decomposition with $k\geq2$. Note that $2L(K_{n,4k})=2L(K_{n,4(k-1)})\oplus 2L(K_{n,4})\oplus\underbrace{2K_{4(k-1),4}\oplus\cdots\oplus2K_{4(k-1),4}}_{n ~copies}$. By Theorem \ref{thm2.5}, $2L(K_{n,4k})$ has a $C_{4}$-decomposition. Thus, $\lambda L(K_{m,n})$ has a $C_{4}$-decomposition.

If $m\equiv 2~(mod~4)$, $n\equiv 3~(mod~4)$ and $m\geq6$, then $4L(K_{3,6})$ has a $C_{4}$-decomposition by Lemma \ref{lem3.5} and $4L(K_{3+4a,6})$ has a $C_{4}$-decomposition by Lemma \ref{lem4.1}. Thus, $4L(K_{6,n})$ has a $C_{4}$-decomposition. Suppose that $4L(K_{4(k-1)+2,n})$ has a $C_{4}$-decomposition with $k\geq2$. Note that $4L(K_{4k+2,n})=4L(K_{4(k-1)+2,n})\oplus 4L(K_{4,n})\oplus\underbrace{4K_{4(k-1)+2,4}\oplus\cdots\oplus4K_{4(k-1)+2,4}}_{n~copies}$. By Theorem \ref{thm2.5}, $4L(K_{4k+2,n})$ has a $C_{4}$-decomposition.
Since $\lambda\equiv 0~(mod~4)$, $\lambda L(K_{m,n})$ has a $C_{4}$-decomposition.

If $m\equiv 2~(mod~4)$, $n\equiv 1~(mod~4)$ and $m\geq6$, then $4L(K_{m,1})\cong 4K_m$ has a $C_{4}$-decomposition by Theorem \ref{thm2.2} and $4L(K_{1+4a,m})$ has a $C_{4}$-decomposition by Lemma \ref{lem4.1}. Thus, $4L(K_{n,m})$ has a $C_{4}$-decomposition. Since $\lambda\equiv 0~(mod~4)$, we have $\lambda L(K_{m,n})$ has a $C_{4}$-decomposition.

If $m=2$, $n\equiv 1~(mod~2)$, then $n\geq5$ and $\lambda\equiv 0~(mod~8)$. Thus, $\lambda L(K_{m,n})$ has a $C_{4}$-decomposition by Lemma \ref{lem3.4}.
\end{proof}


\begin{thebibliography}{19}

\bibitem{Bondy} J.A. Bondy, U.S.R. Murty, Graph Theory with Applications, The Macmillan Press Ltd., New York 1976.

\bibitem{Bryant} D. Bryant, D. Horsley, B. Maenhaut, B.R. Smith, Cycle decompositions of complete multigraphs, J. Combin. Des. 19 (2011) 42-69.

\bibitem{Colby} M. Colby, C.A. Rodger, Cycle decompositions of the line graph of $K_{n}$, J. Combin. Theory Ser. A 62 (1993) 158-161.

\bibitem{Cox1} B.A. Cox, The complete spectrum of $6$-cycle systems of $L(K_{n})$, J.  Combin. Des. 3 (1995) 353-362.

\bibitem{Cox2} B.A. Cox, C.A. Rodger, Cycle systems of the line graph of the complete graph, J.  Graph Theory 21 (1996) 173-182.

\bibitem{Fu} H.L. Fu, C.A. Rodger, Four-cycle systems with two-regular leaves, Graphs Combin. 17 (2001)  457-461.

\bibitem{Ganesamurthy} S. Ganesamurthy, P.A. Paulraja, $C_{5}$-decomposition of the $\lambda$-fold line graph of the complete graph, Discrete Math. 342 (2019) 2726-2732.

\bibitem{Heinrich} K. Heinrich, G.M. Nonay, Exact coverings of $2$-paths by $4$-cycles, J.  Combin. Theory Ser. A 45 (1987) 50-61.

\bibitem{Hoffman} D.G. Hoffman, D.A. Pike, $4$-cycle decompositions of the cartesian product of two complete graphs, J. Combin. Math. Combin. Comput. 28 (1998) 215-226.

\bibitem{Lee} H.C. Lee, Decomposition of the complete bipartite multigraph into cycles and stars, Discrete Math. 338 (2015) 1362-1369.

\bibitem{Sehgal} C.A. Rodger, N. Sehgal, $4$-cycles systems of line graphs of complete multipartites graphs, East-West J. Math. Special Vol. (2008) 129-136.

\end{thebibliography}
\end{document}